\theoremstyle{plain}
\newtheorem{theorem}{Theorem}[section] 
\newtheorem{lemma}[theorem]{Lemma}
\newtheorem{proposition}[theorem]{Proposition}
\theoremstyle{definition}
\newtheorem{definition}[theorem]{Definition}
\theoremstyle{remark}
\newtheorem{remark}[theorem]{Remark}
\icmltitlerunning{Projected Tensor Power Method for Hypergraph Community Recovery}
\newcommand{\Br}{\mathbb{R}}
\newcommand{\BE}{\mathbb{E}}
\newcommand{\BM}{\mathbb{M}}
\newcommand{\BH}{\bm{H}}
\newcommand{\BP}{\bm{P}}
\newcommand{\BQ}{\bm{Q}}
\newcommand{\BA}{\bm{A}}
\newcommand{\BW}{\bm{W}}
\newcommand{\LCal}{\mathcal{L}}
\newcommand{\BCal}{\mathcal{B}}
\newcommand{\CCal}{\mathcal{C}}
\newcommand{\GCal}{\mathcal{G}}
\newcommand{\HCal}{\mathcal{H}}
\newcommand{\OCal}{\mathcal{O}}
\newcommand{\MCal}{\mathcal{M}}
\newcommand{\TCal}{\mathcal{T}}
\newcommand{\XCal}{\mathcal{X}}
\newcommand{\YCal}{\mathcal{Y}}
\newcommand{\ICal}{\mathcal{I}}
\newcommand{\ba}{\begin{array}}
	\newcommand{\ea}{\end{array}}
\newcommand{\ACal}{\mathcal{A}}
\newcommand{\RCal}{\mathcal{R}}
\newcommand{\EE}{{\mathbb{E}}}
\def\ba{\bm{a}}
\def\bx{\bm{x}}
\def\by{\bm{y}}
\def\bh{\bm{h}}
\def\b1{\bm{1}}
\begin{document}

\twocolumn[
\icmltitle{Projected Tensor Power Method for Hypergraph Community Recovery}




\begin{icmlauthorlist}
	\icmlauthor{Jinxin Wang}{to}	
	\icmlauthor{Yuen-Man Pun}{goo}
	\icmlauthor{Xiaolu Wang}{to}
	\icmlauthor{Peng Wang}{ed}
	\icmlauthor{Anthony Man-Cho So}{to}
\end{icmlauthorlist}
\icmlaffiliation{to}{Department of Systems Engineering and Engineering Management, The Chinese University of Hong Kong, Hong Kong}
\icmlaffiliation{goo}{CIICADA Lab, School of Engineering, Australian National University, Canberra}
\icmlaffiliation{ed}{Department of Electrical Engineering and Computer Science, University of Michigan, Ann Arbor}
\icmlcorrespondingauthor{Anthony Man-Cho So}{manchoso@se.cuhk.edu.hk}
 
\icmlkeywords{Machine Learning, ICML}

\vskip 0.3in
]



\printAffiliationsAndNotice{}  

\begin{abstract}
    This paper investigates the problem of exact community recovery in the symmetric $d$-uniform $(d \geq 2)$ hypergraph stochastic block model ($d$-HSBM). In this model, a $d$-uniform hypergraph with $n$ nodes is generated by first partitioning the $n$ nodes into $K\geq 2$ equal-sized disjoint communities and then generating hyperedges with a probability that depends on the community memberships of $d$ nodes. Despite the non-convex and discrete nature of the maximum likelihood estimation problem, we develop a simple yet efficient iterative method, called the \emph{projected tensor power method}, to tackle it. As long as the initialization satisfies a partial recovery condition in the logarithmic degree regime of the problem, we show that our proposed method can exactly recover the hidden community structure down to the information-theoretic limit with high probability. Moreover, our proposed method exhibits a competitive time complexity of $\OCal(n\log^2n/\log\log n)$ when the aforementioned initialization condition is met. We also conduct numerical experiments to validate our theoretical findings.
\end{abstract}

	\section{Introduction}
Community detection (also known as graph clustering) is a fundamental task in various scientific and engineering fields ranging from data mining \citep{cabreros2016detecting,shi2000normalized} to network analysis \citep{girvan2002community}. One celebrated and perhaps the simplest probabilistic model for generating random graphs with community structure is the stochastic block model (SBM) \citep{holland1983stochastic}, which tends to exhibit more edge connections in communities and fewer edge connections across communities. Over the past decades, SBM has served as an important benchmark for validating and comparing various statistical theories and computational methods for community detection under different settings; see \citet{abbe2017community} for a recent comprehensive survey of SBM. Despite the great success of SBM achieved on graph data, pairwise interactions represented by SBM are inadequate for modeling complex relational information in many real-world applications. For example, in social/academic networks, many cooperative relations like chat groups and co-author lists may consist of more than two people. Other applications involving such kind of high-order relations include congress voting
networks \citep{lee2017time}, molecular interaction
networks \citep{michoel2012alignment}, as well as high-order graph matching \citep{duchenne2011tensor}. Hence, it is natural and of keen interest to study an analogous model of SBM for capturing the aforementioned high-order relations.

In this work, to capture high-order interactions among multiple objects, we focus on the symmetric $d$-uniform hypergraph stochastic block model ($d$-HSBM) \citep{ghoshdastidar2014consistency, ghoshdastidar2017consistency, kim2018stochastic, chien2018community, ahn2018hypergraph, ke2019community, cole2020exact, zhang2022exact}---a natural extension of the SBM---and study the problem of exact community recovery. Specifically, in the $d$-HSBM, $n$ nodes are partitioned into $K \geq 2$ unknown equal-sized non-overlapping communities, and each subset of nodes with cardinality $d$ independently forms an order-$d$ hyperedge with probability $p$ if these $d$ nodes are in the same community and with probability $q$ otherwise. The goal is to identify the underlying community structure exactly based on a realization of such a random hypergraph. In the logarithmic degree regime of the $d$-HSBM, i.e., $p=\alpha \log n/n^{d-1}$ and $q =\beta \log n/{n^{d-1}}$ for some $\alpha>\beta >0$, it has been recently established in \citet{kim2017community,kim2018stochastic,zhang2022exact} that there exists a sharp phase transition around a threshold: It is possible to exactly identify the underlying communities with high probability if $\frac{(\sqrt{\alpha} - \sqrt{\beta})^2}{K^{d-1}(d-1)!} >1$ and is impossible to recover the communities with non-vanishing probability if $\frac{(\sqrt{\alpha} - \sqrt{\beta})^2}{K^{d-1}(d-1)!} <1$. On top of this breakthrough, a natural question arises: Can we design a computationally tractable algorithm that achieves exact recovery down to the aforementioned information-theoretic limit? In the past few decades, many computational methods have been developed in addressing this question, such as spectral clustering methods \citep{ghoshdastidar2015provable,ghoshdastidar2017consistency, chien2018community, ahn2018hypergraph,zhang2022exact}, semidefinite programming-based methods \citep{kim2018stochastic,gaudio2022community}, and tensor decomposition-based methods \citep{ke2019community,han2022exact}. However, some of these methods lack theoretical guarantees for exact recovery with high probability at the information-theoretic limit. Moreover, most of these methods have a time complexity of at least $\OCal(n^2)$, which is less favorable in contemporary large-scale problems.

In the symmetric $d$-HSBM with $K \geq 2$ communities, the maximum likelihood (ML) estimation problem takes the form
\begin{align} \label{eq: MLE}
	\max_{\BH \in \mathbb{R}^{n \times K}} \;  \left\{ \left \langle \ACal,  \bm{H}^{\otimes d} \right\rangle : \;  \bm{H} \in \HCal \right\}. \tag{MLE}
\end{align}
Here, $\ACal$ is the adjacency tensor of the observed hypergraph (see its definition in \eqref{eq: defdhsbm}), $\BH^{\otimes d}$ is the outer product of the matrix $\bm{H}$ (see its definition in \eqref{eq: defotimesH}),
\begin{align*}
	\HCal=\left\{ \bm{H} \in \{0,1\}^{n \times K}: \bm{H} \b1_{K}=\b1_{n}, \bm{H}^\top \b1_{n}=m \b1_{K}\right\}
\end{align*}
is the discrete feasible set characterizing the possible community assignment of $n$ nodes into $K$ clusters, $\b1_n$ (resp. $\b1_K$) is the all-one vector of dimension $n$ (resp. $K$), and $m=n/K$ is the number of nodes in each community.
 It is known that an ML estimator can achieve exact recovery with high probability down to the information-theoretic limit; see, e.g., \citet[Proposition 1; Theorem 1]{kim2018stochastic}. Although solving the non-convex problem \eqref{eq: MLE} is NP-hard in the worst case, recent advances in different applications of non-convex optimization, including phase retrieval \citep{candes2015phase,chen2019gradient}, low-rank matrix recovery \citep{chi2019nonconvex,li2020nonconvex}, low-rank tensor decomposition \citep{richard2014statistical,huang2022power,han2022exact}, phase/group synchronization \citep{liu2017estimation,zhong2018near,zhu2021orthogonal,ling2022improved}, community detection \citep{wang2021optimal,wang2021non,wang2020nearly,wang2022exact}, and graph matching \citep{araya2022seeded}, suggest that it could be possible to develop some simple iterative method that solves problem \eqref{eq: MLE} down to the information-theoretic limit. In this work, we propose a simple and scalable method, called \emph{projected tensor power method} (PTPM), to tackle the discrete optimization problem \eqref{eq: MLE} and establish its exact recovery guarantee down to the information-theoretic limit. In contrast to the existing works on \emph{generalized power methods} for solving non-convex optimization problems where theoretical analyses are performed upon a \emph{quadratic} objective \citep{liu2017estimation,zhong2018near,zhu2021orthogonal,ling2022improved,wang2021optimal,wang2021non,wang2022exact,araya2022seeded}, it is worth highlighting that this paper establishes global optimality and fast convergence rate of PTPM for a \emph{polynomial} optimization problem. Thus, our work expands the repertoire of globally solvable non-convex optimization problems by generalized power methods.

\subsection{Related Literature}

There are several different goals for community detection over hypergraphs. One is \emph{exact recovery} (also known as \emph{strong consistency}), which is to identify the true underlying community structures with high probability based on a realization of a random hypergraph. Regarding the associated information-theoretic limit, in the logarithmic degree regime of the symmetric $d$-HSBM with $K=2$, it was proved in \citet[Theorem 1]{kim2018stochastic} that exact recovery is achievable if and only if $\frac{(\sqrt{\alpha} - \sqrt{\beta})^2}{2^{d-1}(d-1)!} >1$. Later, the exact recovery threshold $\frac{(\sqrt{\alpha} - \sqrt{\beta})^2}{K^{d-1}(d-1)!} >1$ was extended to scenarios with $K \geq 2$ in \citet[Theorem 2]{zhang2022exact}. Another two goals are \emph{almost exact recovery} (also known as \emph{weak consistency}) and \emph{partial recovery}, respectively. The former aims at identifying the true communities with a vanishing fraction of misclassified vertices, while the latter merely aims at correctly identifying a constant fraction of vertices; see \citet{abbe2017community} for further details of these goals.

Apart from the information-theoretic limits mentioned above, many efforts have been made to develop algorithms for the exact recovery of the $d$-HSBM over the past few years.
\vspace{-0.3cm}
\paragraph{Spectral methods.}
One popular method is spectral clustering, which generally involves three steps: (i) constructing a data matrix, (ii) performing eigendecomposition of the data matrix, and (iii) applying the $k$-means clustering algorithm to the eigenvectors. For example, in \citet{ghoshdastidar2014consistency,ghoshdastidar2015provable,ghoshdastidar2017consistency}, the authors first constructed a weight matrix based on either the hypergraph Laplacian or the tensor unfolding, and then applied the $k$-means clustering algorithms to the leading $K$ eigenvectors of the obtained weight matrix. However, the theoretical results therein require the hypergraph to be dense, and hence, the condition for exact recovery is not optimal. Moreover, spectral clustering methods generally require polynomial running time.
Recently, \citet{gaudio2022community} proved that the spectral method based on the weight matrix (also called the similarity matrix) of a constructed weighted graph can already achieve exact recovery without performing $k$-means clustering. Although the approach of projecting the original hypergraph into a weighted graph allows one to directly apply existing methods for graph networks, such a method is not optimal for exact recovery due to the loss of information \citep{ke2019community} when constructing the similarity matrix from the observed adjacency tensor. 
\vspace{-0.3cm}
\paragraph{Semidefinite programming (SDP)-based methods.} 
The SDP-based methods with $K=2$ have been considered in \citet{kim2018stochastic,gaudio2022community}. However, the conditions for achieving exact recovery are not optimal as the SDP-based methods only utilize the similarity matrix instead of the original adjacency tensor. Moreover, solving large-scale SDP is usually computationally heavy. 
\vspace{-0.3cm}
\paragraph{Two-stage methods.} It was suggested in \citet{abbe2017community,kim2018stochastic} that a local refinement method together with an initialization satisfying partial recovery can possibly lead to exact recovery. Building on such a high-level idea, there are some follow-up works for tackling the problem via two-stage methods. \citet{chien2019minimax} considered a two-stage algorithm that starts from a weakly consistent initialization and then refines it by a local maximum likelihood estimation method for each node separately. It recovers the communities exactly with high probability down
to information-theoretic threshold in $\OCal(n^3 \log n)$ time; see \citet[Section \uppercase\expandafter{\romannumeral4}.C]{chien2019minimax}. \citet{zhang2022exact} also proposed a two-stage algorithm, in which a hypergraph spectral clustering step in the first stage ensures weak consistency and a follow-up local refinement stage guarantees exact recovery. Their proposed method achieves exact recovery at the information-theoretic limit in polynomial time. In addition, \citet{ke2019community} considered a degree-corrected hypergraph SBM and developed a two-step approach including a regularized high-order orthogonal iteration algorithm and the $k$-means clustering starting from a suitable initialization. Their proposed method generalizes the celebrated tensor power method for tensor principal component analysis (PCA) \citep{richard2014statistical,huang2022power}. The setting therein is more general as it can deal with degree heterogeneity. Nevertheless, their theoretical results are not optimal when applied to the symmetric $d$-HSBM \citep[Corollary 1]{ke2019community}. Other interesting two-stage methods include a high-order extension of the Lloyd algorithm for clustering under the general tensor block model \citep{han2022exact}.

We summarize the above related works in Table \ref{table-0}.
\vskip -0.1in
\begin{table}[!htp]
\setlength\tabcolsep{4pt}
	\caption{Comparison of recovery conditions and time complexities of the discussed methods for exact recovery in the $d$-HSBM ($K\ge 2$).}
	\label{table-0}
	\vskip 0.15in
	{\small 	\begin{center}
			\begin{tabular}{ccccc}
				\toprule
				{\bf References} & {\bf Optimal} &  {\bf Complexities} \\
				\midrule
				\citet{ghoshdastidar2015provable} & \XSolidBrush & Polynomial \\ 
				\makecell[c]{\citet{gaudio2022community}\\ (Spectral method)} & \XSolidBrush & Polynomial \\ 
				\midrule \makecell[c]{\citet{kim2018stochastic}, \\ 
					\citet{gaudio2022community} \\ (SDP)} & \XSolidBrush & Polynomial\\
				\midrule
				\citet{chien2019minimax} & \CheckmarkBold & $\OCal(n^3 \log n)$ \\
				\citet{zhang2022exact} & \CheckmarkBold & Polynomial \\
				\midrule
				\textbf{Ours} & \CheckmarkBold&  $\OCal\left(\frac{n\log^2n}{\log\log n}\right)$\\
				\bottomrule
			\end{tabular}
	\end{center} }
\end{table}

\subsection{Our Contributions}
In this work, we tackle the non-convex discrete optimization problem \eqref{eq: MLE} via a simple and scalable projected tensor power method. Specifically, given an initialization satisfying a certain partial recovery condition, we refine the estimate via projected tensor power iteration successively. In the logarithmic degree regime of the $d$-HSBM, we prove that PTPM can exactly recover the underlying community labels within $\OCal(\log n/ \log \log n)$ iterations with high probability at the information-theoretic limit. Moreover, each iteration requires only $\OCal(n \log n)$ time. Therefore, the overall time complexity of PTPM given a qualified initialization would be $\OCal(n\log^2n/\log\log n)$, which is competitive to the state-of-the-art methods. Besides the simplicity of PTPM, we remark that it only requires an initialization satisfying the partial recovery condition, which is much milder than the conditions imposed in the majority of existing two-stage methods; see, e.g., \citet{chien2019minimax} and \citep{zhang2022exact}. As a result, we provide an affirmative answer to the question raised in \citet{abbe2017community,kim2018stochastic} that whether a local refinement method together with an initialization only satisfying the partial recovery can lead to exact recovery. 
As a byproduct of our analysis, we leverage the Kahn-Szemer{\'e}di argument and provide a novel concentration inequality for dealing with the extremely sparse adjacency tensor (see Lemma \ref{prop: concentration}). Our bound is much tighter than existing concentration bounds for tensors (see, e.g., \citet[Theorem 2.3; Remark 2.2; Lemma 6.1]{zhou2021sparse}) and thus can be of independent interest to applications with high-order relations.

Our work also contributes to the emerging provable non-convex optimization area. In particular, despite the non-convex and discrete nature of problem \eqref{eq: MLE}, our proposed PTPM solves the problem efficiently and optimally given a carefully designed initialization. Moreover, prior to our work, the analyses of the generalized power method are performed on non-convex optimization problems with quadratic objectives; see, e.g., \citet{boumal2016nonconvex,chen2018projected,zhong2018near,zhu2021orthogonal,wang2021non,araya2022seeded}. By sharp contrast, our analysis is performed on an optimization problem with a \emph{polynomial} objective, which expands the repertoire of globally solvable non-convex optimization problems.

The rest of this paper is organized as follows. In Section \ref{sec: notation-prelim}, we review some basic concepts in tensor algebra that will be used throughout the paper. Next, we introduce the $d$-HSBM and the proposed PTPM in Section \ref{sec: PTPM} and present our main results in Section \ref{sec: mainresults}. We then report some numerical results in Section \ref{sec: experiments} and conclude in Section \ref{sec: conclusion}.

\section{Notation and Preliminaries} \label{sec: notation-prelim}
\label{sec: nota-prelim}
We use bold uppercase letters $\bm{A},\bm{B},\dots$ to denote matrices and $\bm{A}_j$ and $A_{ij}$ to denote the $j$-th column and the $(i,j)$-th entry of $\bm{A}$, respectively. We use calligraphic letters $\ACal,\BCal,\dots$ to denote tensors of order three or higher. For instance, an order-$d$ tensor $\ACal \in \Br^{p_1 \times \dots \times p_d}$ represents a $d$-way array of size $p_1\times p_2 \times \dots \times p_d$. The $(i_1,\dots,i_d)$-th entry of a tensor $\ACal$ is denoted by $\ACal_{i_1,\dots,i_d}$. If $p_1=p_2=\dots=p_d =n$, we simply write $\ACal \in T^d(\Br^n)$. We say a tensor $\ACal \in T^d(\Br^n)$ is \emph{symmetric} if $\ACal_{i_1,\dots,i_d} = \ACal_{j_1,\dots,j_d}$ whenever $(j_1,\dots,j_d)$ is a permutation of $(i_1,\dots,i_d)$ and in this case we write $\ACal \in S^d(\Br^n)$. The \emph{inner product} of two tensors $\XCal,\YCal$ with the same dimension is defined as $\langle \XCal,\YCal \rangle = \sum_{i_1,\dots,i_d} \XCal_{i_1,\dots,i_d} \YCal_{i_1,\dots,i_d}$. The \emph{Frobenius norm} of a tensor $\XCal$ is defined as $\| \XCal \|_F = \langle \XCal,\XCal \rangle^{1/2}$. The \emph{multilinear multiplication} of a tensor $\XCal \in \Br^{r_1\times \dots \times r_d}$ by matrices $\bm{U}_k \in \Br^{p_k \times r_k}$ for $k=1,\dots,d$ is defined as
\begin{align*}
	& \quad \left(\XCal \times_{1} \mathbf{U}_{1} \times_2 \cdots  \times_{d} \mathbf{U}_{d}\right)_{i_{1}, \ldots, i_{d}}\\
	& =\sum_{j_{1}=1}^{r_{1}} \cdots \sum_{j_{d}=1}^{r_{d}} \mathcal{X}_{j_{1}, \ldots, j_{d}}\left(\mathbf{U}_{1}\right)_{i_{1} j_{1}} \cdots\left(\mathbf{U}_{d}\right)_{i_{d} j_{d}},
\end{align*}
which outputs an order-$d$ $(p_1,\dots,p_d)$-dimensional tensor. For a vector $\bx \in \Br^n$, the outer product $\bx^{\otimes d}$ is a tensor $\XCal \in S^d(\Br^n)$ with
\begin{align} \label{eq: defotimes}
	\XCal_{i_1,\dots,i_d} = x_{i_1}\times \dots \times  x_{i_d}.
\end{align}
For a matrix $\bm{H} \in \Br^{n \times K}$, the outer product $\bm{H}^{\otimes d}$ is a tensor $\XCal \in S^d(\Br^n)$ \citep[Eq. (3.4)]{kolda2009tensor} with 
\begin{equation}\label{eq: defotimesH}
	\XCal_{i_1,\dots,i_d} = \sum_{k=1}^K H_{i_1 k}\times \cdots \times H_{i_d k}.
\end{equation}
Equivalently, $\XCal = \sum_{k=1}^K (\BH_k)^{\otimes d}$.
For a tensor $\ACal \in S^d(\Br^n)$, the \emph{multilinear operation} $\ACal\left[\bm{H}^{\otimes (d-1)}\right]$ (see \citet[Eq. (2); Section 1.2]{huang2022power} and \citet{richard2014statistical}) outputs an $ n \times K$ matrix with its $(i,k)$-th entry given by
\begin{align} \label{eq: grad-tensor-def}
	& \quad \, \left( \ACal\left[{\bm{H}}^{\otimes (d-1)}\right] \right)_{ik} \nonumber\\
	& = \sum_{1 \leq i_2,\dots,i_d \leq n} \left(\ACal_{i,i_2,\dots,i_d} \times H_{i_2k} \times \cdots \times H_{i_dk} \right) \nonumber \\
	& = \left \langle \ACal, \bm{e}_i \otimes (\bm{H}_k)^{\otimes (d-1)} \right\rangle,
\end{align}
where $\bm{e}_i \in \Br^n$ stands for the zero vector except for the $i$-th entry being one and the outer product operation $\bm{e}_i \otimes (\bm{H}_k)^{\otimes (d-1)}$ is defined as
\begin{align}
	&\left(\bm{e}_i \otimes (\bm{H}_k)^{\otimes (d-1)} \right)_{j,i_2,\dots,i_d} \nonumber \\
    = \;&(\bm{e}_i)_j \cdot \left((\bm{H}_k)^{\otimes (d-1)} \right)_{i_2,\dots,i_d}
\end{align}
for $j=1,\dots,n$ and $1 \leq i_2,\dots,i_d \leq n$.
For a tensor $\ACal \in S^d(\mathbb{R}^n)$, the \emph{mode-1 matricization} of $\ACal$, denoted by $\mathcal{M}(\ACal) \in \Br^{n \times n^{d-1}}$, is defined as
\begin{equation}\label{eq: matricization}
	( \MCal(\ACal))_{ij} = \ACal_{i,i_2,\dots,i_d} \text{ with } j = 1 + \sum_{k=2}^d (i_k-1)n^{k-2};
\end{equation}
see, e.g., \citet[Section 2.4]{kolda2009tensor} and \citet[Section 2.1]{han2022exact}.
For a matrix $\bm{H} \in \Br^{n \times K}$, we use $\bm{H}^{\odot (d-1)} \in \Br^{n^{d-1} \times K}$ to represent the \emph{Khatri-Rao product} (also known as the column-wise Kronecker product) of $\BH$, which is defined as
\begin{equation}\label{eq: krproduct}
	\left( \bm{H}^{\odot (d-1)} \right)_{jk} = H_{i_2k} \times H_{i_{3}k} \times \dots \times H_{i_dk} 
\end{equation}
with $j = 1 + \sum_{k=2}^d (i_k-1)n^{k-2}$.
Combining \eqref{eq: matricization} and \eqref{eq: krproduct} yields a useful fact 
\begin{align} \label{eq: grad-two-eq-expression}
	\ACal\left[{\bm{H}}^{\otimes (d-1)}\right] = \MCal(\ACal) \left( \bm{H}^{\odot (d-1)} \right).
\end{align}
The readers are referred to \citet{kolda2009tensor,sidiropoulos2017tensor,cichocki2016tensor} for a more detailed introduction to tensor algebra. In addition, we use $\Pi_K$ to denote the collections of all $K \times K$ permutation matrices and use $\mathbf{Bern}(p)$ to denote the Bernoulli random variable with parameter $p$. Given a positive integer $n$, we denote by $[n]$ the set $\{1,\dots,n\}$. Given a discrete set $S$, we denote by $|S|$ the cardinality of $S$. If two random variables $X$ and $Y$ are equal in distribution, we write $X \overset{\textnormal{d}}{=}
Y$.  

\section{Projected Tensor Power Method} \label{sec: PTPM}
We formally state the symmetric $d$-HSBM in the following definition.
\begin{definition}[Symmetric $d$-HSBM] \label{def: dHSBM}
	Let $n \ge 2$ be the number of nodes, $K \ge 2$ be the number of communities, and $p,q\in (0,1]$ be the probability parameters of generating hyperedges. Furthermore, let $\BH^* \in \HCal$ represent a hidden partition of $n$ nodes into $K$ equal-sized disjoint communities. A random hypergraph is generated according to the symmetric $d$-HSBM with parameters $(n,d,K,p,q)$ and $\BH^*$ if the adjacency tensor $\ACal \in T^d(\Br^n)$ of such a hypergraph is symmetric and the elements 
 \begin{equation*}
     \{ 	\ACal_{i_1,i_2,\dots,i_d}\}_{1\leq i_1 <i_2<\dots<i_d \leq n}
 \end{equation*}
 are generated independently by
{\small 	\begin{equation}\label{eq: defdhsbm}
			\ACal_{i_1,i_2,\dots,i_d} \sim \begin{cases}\mathbf{Bern}(p),&\text{if } \left(\bh^*_{i_1}\circ \bh^*_{i_2}\circ \dots \circ \bh^*_{i_d}\right)\bm{1}_K =1, \\ \mathbf{Bern}(q),&\text{if }  \left(\bh^*_{i_1}\circ \bh^*_{i_2}\circ \dots \circ \bh^*_{i_d}\right) \bm{1}_K =0,\end{cases}
	\end{equation}}where $\bh_i^*$ is the $i$-th row of $\bm{H}^*$ and ``$\circ$" denotes the Hadamard (i.e., element-wise) product. In addition, since each $d$-uniform hyperedge consists of exactly $d$ nodes, the diagonal elements of $\ACal$ are automatically defined to be 0, i.e., $\ACal_{i_1,\dots,i_d} = 0$ if some indices among $i_1,\dots,i_d$ are identical.
\end{definition}
Given a realization of a random hypergraph generated by the symmetric $d$-HSBM, our goal is to exactly recover the underlying communities (i.e., output $\BH^* \BQ$ for some $\BQ \in \Pi_K$) with high probability via a simple iterative procedure.
In view of the fact that problem \eqref{eq: MLE} is reminiscent of the problem formulation of tensor PCA, a natural attempt is to apply a variant of the tensor power iteration \citep{richard2014statistical,huang2022power}. Although many variants of the classic power method have been developed for solving PCA problems with different structural constraints; see, e.g., \citet{journee2010generalized,deshpande2014cone,chen2018projected,zhong2018near,zhu2021orthogonal,wang2021non}, a new variant of the tensor power iteration method has to be developed for problem \eqref{eq: MLE} as it involves a polynomial objective with a matrix variable and binary constraints.

Our approach for tackling problem \eqref{eq: MLE} is to iteratively apply a tensor power step and a projection step that ensures feasibility of the iterate. Specifically, the projected tensor power iteration for tackling problem \eqref{eq: MLE} takes the form
\begin{align}
	\bm{H}^{t+1} \in \TCal \left(\ACal\left[ \left(\bm{H}^{t} \right)^{\otimes (d-1)}\right] \right)~\text{for}~t \geq 1,
\end{align}
where $\mathcal{T}: \mathbb{R}^{n \times K} \rightrightarrows \mathbb{R}^{n \times K}$ represents the projection operator onto $\mathcal{H}$; i.e., for any $\bm{C} \in \mathbb{R}^{n \times K}$,
\begin{align} \label{eq: defprojoperation}
	\mathcal{T}(\bm{C}) \coloneqq {\rm arg} \min \left\{\|\bm{H}-\bm{C}\|_{F}: \bm{H} \in \mathcal{H}\right\},
\end{align}
and the operator $\ACal\left[ \left(\bm{H}^{t} \right)^{\otimes (d-1)}\right]$ is defined in \eqref{eq: grad-tensor-def}.

As an iterative method, the global convergence of PTPM relies on a proper initialization. Specifically, the initial point $\BH^0$ needs to satisfy the partial recovery condition (see, e.g., \citet{dumitriu2021partial}):
\begin{align} \label{eq: init-partial}
	\bm{H}^0 \in \BM_{n,K} \;  \operatorname{s.t.} \min_{\BQ \in \Pi_K } \| \bm{H}^0 - \bm{H}^* \BQ\|_F \leq \theta \sqrt{n},
\end{align}
where $\BM_{n,K}$ represents the set of such $n \times K$ matrices that each row is all zero except for one element being $1$, and $\theta$ is a constant that will be specified later. We remark that condition \eqref{eq: init-partial} can be satisfied by a host of existing initialization methods. For example, \citet{chien2019minimax} and \citet{zhang2022exact} proposed spectral initialization methods, which can obtain an initial point $\bm{H}^0$ satisfying the almost exact recovery condition (see \citet[Definition 4]{abbe2017community}). The almost exact recovery condition is much more stringent than \eqref{eq: init-partial}, and thus these initialization strategies automatically satisfy the partial recovery requirement in \eqref{eq: init-partial}.

Our proposed PTPM for tackling problem \eqref{eq: MLE} is summarized in Algorithm \ref{alg:PGD}. With a qualified initialization $\BH^0$, it first projects $\BH^0$ onto $\HCal$ to guarantee feasibility. Then, it repeatedly refines the estimate by performing $N$-step projected tensor power iterations.

\begin{algorithm}[!htbp]
	\caption{Projected Tensor Power Method for Solving Problem \eqref{eq: MLE}}  
	\begin{algorithmic}[1]  
		\STATE \textbf{Input:} adjacency tensor $\ACal$, positive integer $N$
		\STATE \textbf{Initialize} an $\BH^0$ satisfying \eqref{eq: init-partial} 
		\STATE set $\BH^1 \leftarrow \TCal(\BH^0)$ 		
		\FOR{$t=1,2,\dots,N$}
		\STATE set $\BH^{t+1} \in \TCal \left(\ACal\left[ \left(\bm{H}^{t} \right)^{\otimes (d-1)}\right] \right)$
		\ENDFOR
		\STATE \textbf{Output} $\BH^{N+1}$
	\end{algorithmic}
	\label{alg:PGD}
\end{algorithm}
\begin{remark}\label{rmk:non-uniform}
    Although Algorithm \ref{alg:PGD} is designed for recovering the community of a symmetric $d$-uniform hypergraph SBM, it is applicable to the \emph{non-uniform} hypergraph stochastic block model: Suppose that the observed hypergraph is non-uniform with the size of the hyperedges ranging from 2 to $d_0$. We can introduce a set of dummy nodes indexed by $-3$ to $-d_0$ to reformulate the non-uniform hypergraph as a uniform hypergraph. Specifically, for any hyperedge consisting of nodes $\{i_1,i_2,\dots,i_d \}$ with $d < d_0$, we add the dummy nodes to the hyperedge such that it is of $d_0$ nodes $\{i_1,i_2,\dots,i_d, -(d+1),\dots,-d_0\}$. Then, given a $d_0$-uniform hypergraph, PTPM can be applied to recover the hidden community structure.
\end{remark}
\begin{remark}It is less straightforward for PTPM to recover the community structure from an \emph{asymmetric} hypergraph. Specifically, the asymmetry induces an extra set of assignment matrix variables in the ML estimation problem corresponding to each mode of the observed adjacency tensor. Therefore, one may have to include an extra inner loop to update the set of assignment matrices for each mode at every projected tensor power iteration when applying the idea of PTPM. A related algorithm is the classic high-order orthogonal iteration (HOOI) (see, e.g., \citet[Figure 4.4]{kolda2009tensor}), but it does not involve a projection step.
\end{remark}

\section{Main Results} \label{sec: mainresults}
We first present the main theorem of this paper, which states the exact recovery of PTPM in Algorithm \ref{alg:PGD} down to the information-theoretic limit and the explicit iteration/time complexity of the algorithm.
\begin{theorem}\label{thm: time complexity}
	Let $\ACal \in S^d(\Br^n)$ be an observed adjacency tensor of the random hypergraph generated according to the symmetric $d$-HSBM in Definition \ref{def: dHSBM} with parameters $(n,d, K, p, q)$ and a planted partition $\boldsymbol{H}^* \in \HCal$. Suppose that $p=\alpha \log n/n^{d-1}, q=\beta \log n/n^{d-1}$ with $\frac{(\sqrt{\alpha} - \sqrt{\beta})^2}{K^{d-1}(d-1)!}>1$ and $n$ is sufficiently large. Then, there exists a constant $\gamma>0$, whose value depends only on $\alpha, \beta$, $d$, and $K$, such that the following statement holds with probability at least $1-n^{-\Omega(1)}$: If the initial point satisfies the partial recovery condition in \eqref{eq: init-partial} with
	\begin{equation}
		\theta = \frac{1}{4} \min \left\{\frac{1}{\sqrt{K(d-1)}}, \frac{\gamma K^{d-3/2}}{16(d-1)(\alpha - \beta)} \right\},
	\end{equation}
	then Algorithm \ref{alg:PGD} outputs a true partition in $\left(\lceil 2 \log \log n\rceil+\left\lceil\frac{2 \log n}{\log \log n}\right\rceil+2 \right)$ projected tensor power iterations. Moreover, Algorithm \ref{alg:PGD} outputs a true partition in $\OCal(n \log^2 n/\log \log n)$ time.
\end{theorem}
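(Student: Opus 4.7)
The plan is to analyze PTPM via a signal-plus-noise decomposition and show a two-phase convergence pattern, then bound the per-iteration cost using sparsity of $\ACal$. Define the Hamming-type error $m_t \mydefn \min_{\BQ \in \Pi_K} \frac{1}{2}\| \BH^t - \BH^* \BQ \|_F^2$, which counts the number of misclassified nodes. Writing $\ACal = \EE[\ACal] + \ZCal$ with noise tensor $\ZCal \mydefn \ACal - \EE[\ACal]$, the power iterate decomposes as
\begin{align*}
\ACal\!\left[(\BH^t)^{\otimes (d-1)}\right] = \EE[\ACal]\!\left[(\BH^t)^{\otimes (d-1)}\right] + \ZCal\!\left[(\BH^t)^{\otimes (d-1)}\right].
\end{align*}
A direct computation using the block structure of $\EE[\ACal]$ shows that the signal row for node $i$ is larger in the correct column of $\BH^*\BQ$ than in any other column by a deterministic gap of order $(\alpha - \beta)\log n / K^{d-1}$ (up to lower-order terms in $m_t/n$); hence the projection $\TCal$ chooses the correct label for node $i$ whenever the corresponding row of the noise term is smaller than this gap.

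The first (bulk) phase drives $m_t$ from $O(n)$ down to a sub-polylogarithmic size at a geometric rate. The key tool here is a row-wise concentration inequality controlling $\bigl\| \ZCal\!\left[(\BH)^{\otimes(d-1)} - (\BH^*\BQ)^{\otimes(d-1)}\right] \bigr\|_F$ uniformly over $\BH$ in the partial-recovery ball, which is precisely what the Kahn--Szemer\'edi style bound (Lemma~\ref{prop: concentration}) delivers. Combining this with the signal analysis yields a contraction $m_{t+1} \leq \rho \, m_t$ for a constant $\rho \in (0,1)$ provided $m_t \geq c \log n$, producing the $\lceil 2\log\log n\rceil$ iterations of the theorem's iteration count. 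The main technical obstacle in this phase is controlling the noise term \emph{uniformly} over all $\BH^t$ that can arise during the iteration, since the same random tensor $\ACal$ is reused across rounds; I would address this with an $\epsilon$-net argument over the finite set of trajectories consistent with the basin defined in \eqref{eq: init-partial}, combined with the tail bound of Lemma~\ref{prop: concentration}.

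The second phase is an entrywise, super-linear analysis that drives $m_t$ to zero. Once $m_t$ is small, the conditional signal gap for a specific node $i$ (given that the other $n - m_t$ labels are correct) is an independent sum of Bernoulli contributions with mean proportional to $\log n$, so by a Chernoff bound the mislabeling probability is at most
\begin{align*}
 n^{-\frac{(\sqrt{\alpha} - \sqrt{\beta})^2}{K^{d-1}(d-1)!} + o(1)},
\end{align*}
which is $o(1/n)$ strictly above the information-theoretic threshold. A union bound over nodes combined with a careful accounting for the $m_t$ ``flipped'' rows yields a super-linear contraction of the form $m_{t+1} \leq C \, m_t / \log n$ with high probability, so $m_t = 0$ after an additional $\lceil 2\log n / \log\log n \rceil$ iterations. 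I expect this entrywise step to be the main obstacle: it must exploit the exact $o(1/n)$ tail rate at the optimal threshold and propagate it uniformly over iterations while handling the statistical dependence between $\BH^t$ and $\ZCal$, which I would again resolve by a net argument tailored to the small-error regime.

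Finally, for the time complexity, the per-iteration cost is dominated by forming $\ACal[(\BH^t)^{\otimes(d-1)}]$. Using the identity \eqref{eq: grad-two-eq-expression}, this reduces to a sparse-tensor times Khatri--Rao product which, by iterating over the $O(n \log n)$ nonzero entries of $\ACal$ (each contributing $O(K)=O(1)$ work), can be carried out in $O(n \log n)$ time. The projection $\TCal$ onto $\HCal$ runs in $O(nK) = O(n)$ time via an $\argmax$ over the $K$ columns of each row (with a greedy balancing pass). Multiplying the per-iteration cost by the total iteration count $O(\log n / \log\log n)$ yields the claimed $\OCal(n \log^2 n / \log \log n)$ runtime.
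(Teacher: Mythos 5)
Your proposal has the right architecture---two-phase contraction analysis culminating in an error strictly below $\sqrt{2}$, plus a per-iteration cost of $O(n\log n)$ from sparsity---but several quantitative claims in the middle do not fit together, and one would have to repair them to get the stated iteration count.

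First, your Phase~1 endpoint is wrong and self-inconsistent. You claim a constant-factor contraction $m_{t+1}\le \rho\,m_t$ drives $m_t$ ``down to a sub-polylogarithmic size'' in $\lceil 2\log\log n\rceil$ iterations. With $m_0=O(n)$ and constant $\rho\in(0,1)$, after $\lceil 2\log\log n\rceil$ steps you only obtain $m_t = O\bigl(n/(\log n)^{c}\bigr)$ for a constant $c$, not sub-polylogarithmic size; reaching the latter would cost $\Theta(\log n)$ iterations and the overall budget would fail. The paper's Phase~1 (proof of Theorem~\ref{thm: iteration complexity}) only needs to shrink the Frobenius error from $\Theta(\sqrt{n})$ to $\Theta(\sqrt{n/\log n})$, i.e.\ $m_t$ from $\Theta(n)$ to $\Theta(n/\log n)$, precisely because the contraction factor in Proposition~\ref{prop: localcontraction} is $\kappa = 4\max\{O(\varepsilon), O(1/\sqrt{\log n})\}$: once $\varepsilon = O(1/\sqrt{\log n})$, i.e.\ $m_t = O(n/\log n)$, the contraction factor switches from constant to $O(1/\sqrt{\log n})$. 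Your phase-transition threshold ``$m_t \geq c\log n$'' is not the relevant scale.

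Second, your Phase~2 mechanism is genuinely different from the paper's and your flagged obstacle is real. The paper does not do a per-node Chernoff analysis at the threshold in Phase~2; it reuses the same Frobenius-norm contraction of Proposition~\ref{prop: localcontraction} but now with the improved rate $4C/(\gamma\sqrt{\log n})$, which is built from the Kahn--Szemer\'{e}di deviation bound (Lemma~\ref{prop: concentration}) and the separation of the noiseless statistic (Lemma~\ref{lemma: ground truth}); the threshold-optimal constant $(\sqrt{\alpha}-\sqrt{\beta})^2/(K^{d-1}(d-1)!)>1$ enters only through Lemma~\ref{lemma: ground truth} establishing $C_{ik}-C_{i\ell}\ge\gamma\log n$ for the \emph{ground-truth} $\BH^*$, not through a conditional Chernoff bound on corrupted iterates. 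Your entrywise argument is in the spirit of a Lloyd-type refinement analysis and might be made to work, but it would require a fresh way to decouple $\BH^t$ from $\ACal$ that you have only sketched; the paper avoids the issue by establishing the contraction as a deterministic inequality conditional on a high-probability event over $\ACal$. Also note that your lower bound $m_{t+1}\le C m_t/\log n$ is consistent with the paper's squared-Frobenius contraction $\|\BH^{t+1}-\BH^*\BQ\|_F^2 \le (16C^2/\gamma^2\log n)\|\BH^t-\BH^*\BQ\|_F^2$, but only if Phase~1 terminates at $m = \Theta(n/\log n)$ (not sub-polylog), again highlighting the inconsistency above.

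Third, a minor but real gap in the time-complexity argument: you assert that the projection $\TCal$ is computed in $O(nK)$ by a row-wise $\argmax$ ``with a greedy balancing pass.'' The constraint $\BH^\top\b1_n = m\b1_K$ makes $\TCal$ a balanced linear assignment problem; a greedy balancing pass does not in general produce the $\ell_2$-nearest feasible matrix, and the Lipschitz bound (Lemma~\ref{lemma: lipproj}) that drives the contraction requires $\TCal$ to be the exact projection. The paper instead invokes the $O(K^2 n\log n)$ projection algorithm of Wang et al., which is what makes the overall $O(n\log^2 n/\log\log n)$ budget close for constant $K$.

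\end{document}
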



Let us give the proof outline here before proceeding. Given an initialization $\bm{H}^0$ satisfying the partial recovery condition \eqref{eq: init-partial}, we show that its projection onto $\mathcal{H}$, namely $\bm{H}^1$, still lies in a neighborhood of the ground truth using certain Lipschitz-type inequality (see \eqref{eq: iteration1step} and \eqref{eq: h1bound} for details). Then, by analyzing the effect of the multilinear operation $\mathcal{A}[\cdot]$ and the projection $\mathcal{T}(\cdot)$, we show that each projected tensor power iteration possesses a local contraction property. In other words, the iterate $\bm{H}^t$ gets closer to the ground truth $\bm{H}^*\bm{Q}$ at every iteration (since $\bm{H}^*\bm{Q}$ represents the same partition as $\bm{H}^*$ for any permutation matrix $\bm{Q}$). Specifically, we show that the distance between the iterate $\bm{H}^t$ and the ground truth $\bm{H}^*\bm{Q}$ shrinks by a factor of $\frac{1}{2}$ at each iteration (see \eqref{eq: firstpart_goal}). Moreover, after $N_1 = \lceil 2\log\log n \rceil + 1$ iterations, the iterate gets so close to the ground truth that the contraction factor further reduces to $\mathcal{O}\left(\frac{1}{\sqrt{\log n}}\right)$ (see \eqref{step2:pf-thm-1}). Then, after additional $N_2 = \left\lceil \frac{2\log n}{\log\log n} \right\rceil$ iterations, we can upper bound the distance between the iterate $\bm{H}^{N_1 + N_2}$ and the ground truth $\bm{H}^*\bm{Q}$ strictly by $\sqrt{2}$ (see \eqref{N2:pf-thm-1}). Due to the discrete nature of the feasible set $\mathcal{H}$, this means that the iterate $\bm{H}^{N_1 + N_2}$ has achieved exact recovery (i.e., $\bm{H}^{N_1 + N_2} = \bm{H}^*\bm{Q}$). Multiplying the obtained iteration complexity by the time complexity of each projected tensor power iteration, we can derive the total time complexity of PTPM in Theorem \ref{thm: time complexity}.

In the remaining part of this section, we provide the proof of Theorem \ref{thm: time complexity}. We break down the analysis of each projected tensor power iteration by studying the effect of the multilinear operation $\mathcal{A}[\cdot]$ (in Lemmas \ref{eq: lemma-multi-linear}, \ref{prop: concentration}, \ref{lemma: difbinom}, and \ref{lemma: ground truth}) and the projection $\mathcal{T}(\cdot)$ (in Lemma \ref{lemma: lipproj}), respectively, and then establish a local contraction property of the projected tensor power iteration in Proposition \ref{prop: localcontraction}. Based on the contraction property, we derive the iteration complexity of PTPM to achieve exact community recovery in Theorem \ref{thm: iteration complexity} and also the time complexity in Theorem \ref{thm: time complexity}.

We start with characterizing the effect of the multilinear operation $\ACal[\BH^{\otimes(d-1)}]$ in Lemma \ref{eq: lemma-multi-linear}. To make the presentation more concise, we slightly change the definition of $\ACal$ in this lemma: For the diagonal elements with some of the indices $i_1,\ldots,i_d$ being identical, we have
$\ACal_{i_1,\dots,i_d} \sim \mathbf{Bern}(p)$ if the $d$ nodes are in the same community and $\ACal_{i_1,\dots,i_d} \sim \mathbf{Bern}(q)$ otherwise. In fact, the error term incurred by this modification is negligible, on which we will comment in Remark \ref{remark: diagonalofA}.
\begin{lemma} \label{eq: lemma-multi-linear}
	Suppose that $\varepsilon \in \left(0,1/\sqrt{K(d-1)}\right) $ and $\bm{H} \in \HCal$ such that $\| \bm{H} - \bm{H}^*\BQ \|_F \leq \varepsilon \sqrt{n}$ for some $\BQ \in \Pi_K$. Then, with probability at least $1-n^{-10}$, we have
	\begin{align} \label{eq: multi-linear}
		& \quad \left\| \ACal\left[\bm{H}^{\otimes (d-1)}\right] - \ACal\left[ (\bm{H}^*\BQ)^{\otimes (d-1)}\right] \right\|_F \nonumber \\ 
		& \leq \left(\frac{4(d-1)m^{d-2}\varepsilon n}{\sqrt{K}} (p-q) + C\sqrt{\log n} \right) \|\bm{H} - \bm{H}^*\BQ\|_F
	\end{align}  
	with $C>0$ being a constant.
\end{lemma}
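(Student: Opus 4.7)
The plan is to exploit the matricization identity \eqref{eq: grad-two-eq-expression}, $\ACal[\BH^{\otimes(d-1)}] = \MCal(\ACal)\,\BH^{\odot(d-1)}$, to rewrite the quantity of interest as
\begin{equation*}
\ACal\bigl[\BH^{\otimes(d-1)}\bigr] - \ACal\bigl[(\BH^*\BQ)^{\otimes(d-1)}\bigr] = \MCal(\ACal)\bigl[\BH^{\odot(d-1)} - (\BH^*\BQ)^{\odot(d-1)}\bigr],
\end{equation*}
and then to split $\MCal(\ACal) = \MCal(\EE\ACal) + \MCal(\ACal - \EE\ACal)$ into a deterministic signal piece and a zero-mean noise piece. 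As a preparatory step I would establish the Khatri-Rao telescoping identity
\begin{equation*}
\BH^{\odot(d-1)} - (\BH^*\BQ)^{\odot(d-1)} = \sum_{j=1}^{d-1} \BH^{\odot(j-1)} \odot (\BH - \BH^*\BQ) \odot (\BH^*\BQ)^{\odot(d-1-j)},
\end{equation*}
applied column-wise. Because each $\BH^{\odot\ell}$ and $(\BH^*\BQ)^{\odot\ell}$ has $K$ columns of disjoint $0/1$ support with squared column norm $m^\ell$, this identity reduces the Frobenius norm of the Khatri-Rao difference to $\OCal(m^{(d-2)/2}\|\BH-\BH^*\BQ\|_F)$ and, together with the operator norm of $\MCal(\ACal-\EE\ACal)$, will control the noise contribution.

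For the signal part, write $\EE\ACal = q\JCal + (p-q)\sum_{k=1}^K (\BH^*_k)^{\otimes d}$, where $\JCal$ is the all-ones tensor. Since both $\BH$ and $\BH^*\BQ$ lie in $\HCal$ and therefore have identical column sums equal to $m$, the $q\JCal$ contribution is annihilated column by column. What remains is $(p-q)\sum_{k=1}^K \MCal((\BH^*_k)^{\otimes d})\bigl[\BH^{\odot(d-1)} - (\BH^*\BQ)^{\odot(d-1)}\bigr]$, whose $(i,k)$ entry reduces to $\sum_j H^*_{ij}\bigl(\langle \BH^*_j,\BH_k\rangle^{d-1} - \langle \BH^*_j,(\BH^*\BQ)_k\rangle^{d-1}\bigr)$. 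I would bound each such difference by the elementary inequality $|a^{d-1} - b^{d-1}| \leq (d-1)\max(|a|,|b|)^{d-2}|a-b|$ with $\max(|a|,|b|) \leq m$. A Cauchy-Schwarz step together with a careful count of misclassified nodes per cluster, combined once with the partial-recovery hypothesis $\|\BH-\BH^*\BQ\|_F \leq \varepsilon\sqrt n$ to convert an intrinsically quadratic misclassification count into a linear-in-$\|\BH_k - (\BH^*\BQ)_k\|$ quantity at the cost of an extra $\varepsilon\sqrt n$ factor, would then produce the signal term $\tfrac{4(d-1)m^{d-2}\varepsilon n}{\sqrt K}(p-q)\|\BH-\BH^*\BQ\|_F$.

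For the noise part, I would invoke the concentration bound in Lemma \ref{prop: concentration} to obtain $\|\MCal(\ACal - \EE\ACal)\|_{\op} \leq C'\sqrt{\log n}$ with probability at least $1 - n^{-10}$. Feeding this into the Khatri-Rao telescoping bound above yields the stated $C\sqrt{\log n}\|\BH - \BH^*\BQ\|_F$ noise contribution under the assumptions $\varepsilon < 1/\sqrt{K(d-1)}$ and $m = n/K$, with $C$ an absolute constant.

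The main obstacle is the bookkeeping in the signal part: one must simultaneously exploit (i) the vanishing of the $q\JCal$ piece via the column-sum constraint in $\HCal$, (ii) the exact inner-product identities $\langle \BH^*_j,(\BH^*\BQ)_k\rangle \in \{0,m\}$ depending on whether the permutation $\BQ$ matches $j$ to $k$, and (iii) the conversion of the intrinsically quadratic misclassification counts into a linear-in-$\|\BH - \BH^*\BQ\|_F$ quantity via the partial-recovery hypothesis. Balancing these three ingredients carefully is what is required to obtain the sharp $4(d-1)$ prefactor and the correct exponents of $m$ and $K$ in the signal coefficient; by contrast, the noise piece is an essentially routine consequence of Lemma \ref{prop: concentration} once the Khatri-Rao telescoping is in place.
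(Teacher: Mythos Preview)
Your signal-part plan is sound and parallels the paper's argument; the paper routes the computation through an auxiliary $K\times K$ matrix $\bm{Z}$ with $Z_{kj}=m^{-(d-1)}\langle\BH_j,(\BH^*\BQ)_k\rangle^{d-1}$ and an orthogonal decomposition of $(\BH_j)^{\otimes(d-1)}$ rather than your mean-value inequality, but either route delivers the $\varepsilon$-dependent coefficient.

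The noise part, however, contains a real gap. First, Lemma~\ref{prop: concentration} does \emph{not} provide an operator-norm bound on $\MCal(\ACal-\EE\ACal)$: its Kahn--Szemer\'edi proof discretizes only the $n$-dimensional $\bx$-ball and treats $\by$ as fixed, so you cannot read off $\|\MCal(\Delta)\|_{\op}\le C'\sqrt{\log n}$ from it. Second, and more damaging, even if such an operator-norm bound were available, multiplying it against your own Khatri--Rao estimate $\|\BH^{\odot(d-1)}-(\BH^*\BQ)^{\odot(d-1)}\|_F=\OCal\bigl(m^{(d-2)/2}\|\BH-\BH^*\BQ\|_F\bigr)$ yields a noise contribution of order $(n/K)^{(d-2)/2}\sqrt{\log n}\,\|\BH-\BH^*\BQ\|_F$, not $C\sqrt{\log n}\,\|\BH-\BH^*\BQ\|_F$; for every $d\ge 3$ this is off by a growing polynomial in $n$ and would destroy the contraction in Proposition~\ref{prop: localcontraction}. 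The paper in fact explicitly flags the route ``operator norm times \eqref{eq: kroneck-ineq}'' as the loose attempt that Lemma~\ref{prop: concentration} is designed to circumvent. The correct application is to feed each column $\by_k=\BH_k^{\odot(d-1)}-(\BH^*\BQ)_k^{\odot(d-1)}$ directly into Lemma~\ref{prop: concentration}: this $\by_k$ is a $\{0,\pm1\}$-vector with $\OCal(s_k\, n^{d-2})$ nonzeros (where $2s_k=\|\BH_k-(\BH^*\BQ)_k\|_2^2$), so in the lemma's notation $\ell_k=\OCal(s_k)$ and the output is $C\sqrt{\xi}\sqrt{\ell_k}=\OCal\bigl(\sqrt{\log n}\,\|\BH_k-(\BH^*\BQ)_k\|_2\bigr)$. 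It is exactly this $\sqrt{\ell_k}$-versus-$\|\by_k\|_2=\sqrt{\ell_k\,n^{d-2}}$ distinction that recovers the missing factor of $n^{(d-2)/2}$; your telescoping identity plays no role here.
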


	The idea of proving Lemma~\ref{eq: lemma-multi-linear} is to separately bound
\[
\left\| (\EE[\ACal])\left[\bm{H}^{\otimes (d-1)}\right] - (\EE[\ACal])\left[ (\bm{H}^*\BQ)^{\otimes (d-1)}\right] \right\|_F
\]
and 
\[
\left\| \Delta\left[\bm{H}^{\otimes (d-1)}\right] - \Delta\left[ (\bm{H}^*\BQ)^{\otimes (d-1)}\right] \right\|_F
\]
with high probability, where $\Delta = \ACal - \EE [\ACal]$ is the deviation of $\ACal$ from its expectation. On one hand, the former term can be computed via the definition of $\ACal$ and some algebraic inequalities; on the other hand, using \eqref{eq: grad-two-eq-expression}, the latter term would yield
\begin{equation} \label{eq:bdn}
    \left \| \mathcal{M}(\Delta) \left(\bm{H}_k^{\odot (d-1)} - (\bm{H}^*\bm{Q})_k^{\odot (d-1)} \right) \right\|_2~\text{for}~ k\in [K];
\end{equation}
see \eqref{eq: boundnoise}. A natural attempt to bound \eqref{eq:bdn} is to directly apply the inequalities \eqref{eq: kroneck-ineq} and the results in \citet[Theorem 2.3; Remark 2.2; Lemma 6.1]{zhou2021sparse}. Yet, this approach would yield a loose upper bound. As a remedy, we present a useful concentration bound of \eqref{eq:bdn}, as shown in Lemma \ref{prop: concentration}. The trick is to leverage the Kahn-Szemer{\'e}di argument \citep{feige2005spectral}, which has been applied to obtain bounds for the spectral norm of sparse binary random square matrices \citep{lei2015consistency} and tensors \citep{zhou2021sparse}. The result is new and can be of interest to other applications with high-order relations.

\begin{lemma} \label{prop: concentration}
	Let $d \geq 2$ and $\frac{1}{s_1} \leq \ell \leq n$ for some constant $s_1\geq 1$. Let $\BA \in \Br^{n \times n^{d-1}}$ be a random matrix whose $(i,j)$-th entry independently follows $\mathbf{Bern}(p_{ij})$. Set $\BP = \EE[\BA]$ and assume that $\xi \coloneqq n^{d-1} \cdot p_{\max} \geq c_0 \log n$ for $p_{\max} \coloneqq \max_{i\in [n],j\in[ n^{d-1}]}p_{ij}$ and some constant $c_0 >0$. Given a vector $\by \in \Br^{n^{d-1}}$ that has exactly $\ell \cdot n^{d-2}$ nonzero elements with half of them taking $1$ and the others taking $-1$, then, for any $r >0$, there exists a constant $C>0$ such that 
	\begin{equation}\label{eq:concentration}
		\| (\BA - \BP) \by \|_2 \leq C\sqrt{\xi}\cdot \sqrt{\ell}
	\end{equation}
	with probability at least $1-n^{-r}$.
\end{lemma}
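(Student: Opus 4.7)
The plan is to carry out a Kahn-Szemer{\'e}di-style argument (cf.\ \citet{feige2005spectral,lei2015consistency,zhou2021sparse}), exploiting the fact that $\by$ is a \emph{fixed} vector with entries in $\{-1,0,1\}$ supported on exactly $\ell n^{d-2}$ coordinates. First, I would use the variational characterization
\[
\|(\BA - \BP)\by\|_2 = \sup_{\bx \in \BS^{n-1}} \bx^\T (\BA - \BP)\by,
\]
and reduce the supremum to a $\tfrac{1}{2}$-net $\NCal \subset \BS^{n-1}$ of cardinality at most $5^n$, losing only a factor of $2$. For each $\bx \in \NCal$, split the bilinear form as $\bx^\T (\BA-\BP)\by = L(\bx) + H(\bx)$ according to whether $|x_i| \leq \tau$ (``light'') or $|x_i| > \tau$ (``heavy''), with threshold $\tau$ of order $\sqrt{\xi\ell}/n$; the split need only involve $\bx$ since $\by$ is already $\{-1,0,1\}$-valued.

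For the light sum, every summand $x_i(A_{ij}-p_{ij})y_j$ is a centered, independent random variable bounded by $\tau$, with total variance at most $p_{\max}\|\by\|_2^2 = \xi\ell/n$. Bernstein's inequality then yields
\[
\PP\bigl(|L(\bx)| > C_1\sqrt{\xi\ell}\bigr) \leq 2\exp(-\Omega(n))
\]
for $C_1$ large enough depending on $r$, and a union bound over $\NCal$ produces $\sup_{\bx\in\NCal} |L(\bx)| \leq C_1\sqrt{\xi\ell}$ with probability at least $1 - n^{-r}/2$.

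The heavy sum is the delicate step, since a naive row-wise Bernstein estimate loses a spurious factor of $\sqrt{\log n}$. To avoid this, I would partition the heavy coordinates dyadically into $\OCal(\log n)$ buckets by the magnitude of $|x_i|$ (using $\|\bx\|_2=1$ to control the cardinality of each bucket, which is at most $1/\tau^2 = \OCal(n^2/(\xi\ell))$) and bound each bucket via a uniform combinatorial discrepancy inequality of the form
\[
\Biggl| \sum_{i \in S,\, j \in \text{supp}(\by)} (A_{ij}-p_{ij})\, y_j \Biggr| \;\lesssim\; \sqrt{\xi\, |S|\, \ell/n}
\]
holding simultaneously over every $S \subseteq [n]$ of each prescribed size. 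This uniform estimate would be obtained by an exponential-moment computation followed by a union bound over the $\binom{n}{|S|}$ subsets, closely paralleling \citet[Section 5]{lei2015consistency} and \citet[Lemma 6.1]{zhou2021sparse}, but keeping the support size $\ell n^{d-2}$ of $\by$ explicit throughout to sharpen the variance proxy by a factor of $\ell/n$ relative to the generic unit-vector case.

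The main obstacle is precisely this last point: producing a heavy-pair bound with leading constant $\OCal(\sqrt{\xi\ell})$ rather than $\OCal(\sqrt{\xi\ell\log n})$ requires tracking the support of $\by$ through the entire Kahn-Szemer{\'e}di machinery (exponential moments of the bipartite edge count, optimization of the moment parameter, and the simultaneous union bound over the net $\NCal$ and the subsets $S$). Once both the light- and heavy-sum bounds are in place, combining them via the net inequality yields \eqref{eq:concentration} with probability at least $1-n^{-r}$.
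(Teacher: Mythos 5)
Your overall architecture — variational characterization, discretization, a light/heavy split, and a Kahn--Szemer\'edi treatment of the heavy pairs — is the same as the paper's, and the light-pair analysis is correct (the paper splits on the product $|x_i y_j|$ rather than on $|x_i|$, but since every nonzero $|y_j|$ has the same magnitude these two splits coincide on the support of $\by$, which is exactly why only one dyadic level $J_{t'}$ survives in the paper's bookkeeping). The gap is in the heavy part. The claimed uniform discrepancy inequality
\[
\Bigl|\sum_{i\in S,\ j\in\mathrm{supp}(\by)}(A_{ij}-p_{ij})y_j\Bigr|\ \lesssim\ \sqrt{\xi\,|S|\,\ell/n}\quad\text{for all }S\subseteq[n]\text{ of a prescribed size}
\]
is not provable by an exponential-moment plus union-bound computation as you describe, and indeed it is false as a high-probability statement. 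Already for $|S|=1$ the left-hand side has variance $\Theta(\xi\ell/n)$, so to make the event hold simultaneously for all $n$ singletons one must pay a $\sqrt{\log n}$ penalty; more generally, union-bounding over $\binom{n}{|S|}$ sets forces a factor $\sqrt{\log(n/|S|)}$ on the right-hand side. The whole point of the Kahn--Szemer\'edi argument is that one cannot and does not prove such a clean deviation bound. Instead, one bounds the (trivially small) expectation contribution $\sum_{\overline{\LCal}}x_iy_jp_{ij}$ separately and then controls the raw counts $e(I,J)=\sum_{i\in I,j\in J}A_{ij}$ via two facts: a bounded-degree estimate (each row sum of $\BA$ is $O(\xi)$), and a \emph{dichotomy} lemma stating that for every admissible $(I,J)$, \emph{either} $e(I,J)/\bar\mu(I,J)\le ec_2$, \emph{or} $e(I,J)\log\!\bigl(e(I,J)/\bar\mu(I,J)\bigr)\le c_3\,(|J|/n^{d-2})\log(n^{d-1}/|J|)$. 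This entropic second branch is precisely what absorbs the union-bound cost without producing a $\sqrt{\log}$ in the final answer, and it then has to be combined with a multi-case analysis over the dyadic levels $I_s$ (cases $\CCal_1,\dots,\CCal_6$ in the paper) to sum the heavy contributions to $O(\sqrt{\xi\ell})$. You correctly identify that tracking the support size $\ell n^{d-2}$ of $\by$ through this machinery is the crux, but the shortcut inequality you propose to do so cannot be established, and without stating and proving the dichotomy (and the downstream case analysis) the heavy-pair bound remains open.
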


\begin{remark} \label{remark: diagonalofA}
The results in Lemma \ref{eq: lemma-multi-linear} still hold when the diagonal entries of $\mathcal{A}$ are 0 (as defined in Definition \ref{def: dHSBM}). In fact, since the modification on the tensor $\mathcal{A}$ only takes place in the diagonal entries of the tensor, it is negligible when compared with the effect of $\mathbb{E}[\mathcal{A}]$ acting on the matrices $\bm{H}$ and $\bm{H}^*\bm{Q}$. We will leave a detailed discussion in the appendix.
\end{remark}

Next, we shift our attention to the projection operator $\TCal$ and present its Lipschitz-like property in the following lemma \citep[Lemma 3]{wang2021optimal}.
\begin{lemma} \label{lemma: lipproj}
	Let $\delta>0, \bm{C} \in \mathbb{R}^{n \times K}$ be arbitrary and $m=n / K$. If there exists a collection of index sets $\mathcal{I}_{1}, \ldots, \mathcal{I}_{K}$ satisfying $\cup_{k=1}^{K} \mathcal{I}_{k}=[n], \mathcal{I}_{k} \cap \mathcal{I}_{\ell}=\emptyset$, and $\left|\mathcal{I}_{k}\right|=m$ such that $\bm{C}$ satisfies
	\begin{equation} \label{eq: Centeydelta}
		C_{i k}-C_{i \ell} \geq \delta
	\end{equation}
	for all $i \in \mathcal{I}_{k}$ and $1 \leq k \neq \ell \leq K$. Then, for any $\bm{V} \in$ $\mathcal{T}(\bm{C}),~ \bm{C}^{\prime} \in \mathbb{R}^{n \times K}$ and $\bm{V}^{\prime} \in \mathcal{T}\left(\bm{C}^{\prime}\right)$, we have
	\begin{equation}	\left\|\bm{V}-\bm{V}^{\prime}\right\|_{F} \leq \frac{2\left\|\bm{C}-\bm{C}^{\prime}\right\|_{F}}{\delta} .
    \end{equation}
\end{lemma}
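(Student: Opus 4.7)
The plan is to exploit the variational characterization of $\TCal$ together with the margin condition \eqref{eq: Centeydelta}. Since every $\bm{H} \in \HCal$ has the same Frobenius norm $\sqrt{n}$, the projection problem $\min_{\bm{H} \in \HCal}\|\bm{H} - \bm{C}\|_F^2$ is equivalent to the linear assignment $\max_{\bm{H} \in \HCal}\langle \bm{H}, \bm{C}\rangle$. Thus both $\bm{V}$ and $\bm{V}'$ are global linear maximizers on the discrete set $\HCal$, yielding the two optimality inequalities
\[
\langle \bm{V} - \bm{V}', \bm{C}\rangle \;\geq\; 0 \qquad \text{and} \qquad \langle \bm{V}' - \bm{V}, \bm{C}'\rangle \;\geq\; 0.
\]
Adding them and invoking Cauchy-Schwarz produces the standard monotone-operator bound
\[
\langle \bm{V} - \bm{V}', \bm{C}\rangle \;\leq\; \langle \bm{V} - \bm{V}', \bm{C} - \bm{C}'\rangle \;\leq\; \|\bm{V}-\bm{V}'\|_F \, \|\bm{C}-\bm{C}'\|_F.
\]

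The crux is to lower bound $\langle \bm{V} - \bm{V}', \bm{C}\rangle$ by $\delta$ times the number of disagreeing rows. First I would observe that the margin condition \eqref{eq: Centeydelta} pins down $\bm{V}$ completely: since $|\mathcal{I}_k|=m$ for each $k$, the assignment $V_{ik}=1 \Leftrightarrow i \in \mathcal{I}_k$ already lies in $\HCal$, and by \eqref{eq: Centeydelta} it is the unique row-wise maximizer of $\langle \bm{H}, \bm{C}\rangle$, so it must coincide with every element of $\TCal(\bm{C})$. Let $S = \{i : \bm{V}_i \neq \bm{V}'_i\}$. For each $i \in S$ with $i \in \mathcal{I}_k$ and $V'_{i\ell}=1$ for some $\ell \neq k$, \eqref{eq: Centeydelta} gives $\langle \bm{V}_i - \bm{V}'_i, \bm{C}_i\rangle = C_{ik} - C_{i\ell} \geq \delta$, while rows outside $S$ contribute zero. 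Hence $\langle \bm{V} - \bm{V}', \bm{C}\rangle \geq \delta |S|$. Moreover, every nonzero row of $\bm{V}-\bm{V}'$ has squared Euclidean norm $2$, so $\|\bm{V}-\bm{V}'\|_F^2 = 2|S|$.

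Combining the two bounds yields $\tfrac{\delta}{2}\|\bm{V}-\bm{V}'\|_F^2 \leq \|\bm{V}-\bm{V}'\|_F \, \|\bm{C}-\bm{C}'\|_F$, and dividing by $\|\bm{V}-\bm{V}'\|_F$ (the case $\bm{V}=\bm{V}'$ being trivial) gives the desired Lipschitz-type inequality with constant $2/\delta$. The only subtle point is that the column-balance constraint $\bm{H}^\top \b1_n = m \b1_K$ in $\HCal$ rules out a naive row-by-row swapping argument: one cannot flip a single row of $\bm{V}'$ to a better column without breaking balance. The proposed argument sidesteps this entirely by invoking only the global optimality of $\bm{V}$ and $\bm{V}'$ over all of $\HCal$ and then extracting row-wise information after the fact through the uniform margin, which is precisely what \eqref{eq: Centeydelta} is designed to deliver.
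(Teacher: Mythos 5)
The paper does not prove this lemma itself; it cites it verbatim from \citet[Lemma~3]{wang2021optimal}, so there is no in-paper proof to compare against, but your standalone argument is correct and follows the same standard route used in that reference. The essential steps are all in place: reducing the projection onto $\HCal$ to a linear maximization (valid because $\|\bm{H}\|_F^2 = n$ is constant on $\HCal$), combining the two optimality inequalities into $\langle \bm{V}-\bm{V}',\,\bm{C}-\bm{C}'\rangle \ge 0$, observing that the margin condition together with $|\mathcal{I}_k|=m$ forces $\bm{V}$ to coincide with the row-wise argmax so that $\langle \bm{V}-\bm{V}',\,\bm{C}\rangle \ge \delta\,|S| = \tfrac{\delta}{2}\|\bm{V}-\bm{V}'\|_F^2$ with $S$ the set of disagreeing rows, and then dividing through by $\|\bm{V}-\bm{V}'\|_F$.
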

Considering $\bm{C} = \mathcal{A}\left[(\bm{H}^*)^{\otimes (d-1)}\right]$, the following two lemmas show that condition \eqref{eq: Centeydelta} in Lemma \ref{lemma: lipproj} can be satisfied with high probability. We start with a lemma addressing a binomial tail inequality below.

\begin{lemma}\label{lemma: difbinom}
	Let $m=n / K$ and $\alpha>\beta>0$ be constants. Suppose that $\left\{W_{i}\right\}_{i=1}^{\binom{m-1}{d-1}}$ are i.i.d. $\mathbf{Bern}(\alpha \log n / n^{d-1})$ and $\left\{Z_{i}\right\}_{i=1}^{\binom{m}{d-1}}$ are i.i.d. $\mathbf{Bern}(\beta \log n / n^{d-1})$ that is independent of $\left\{W_{i}\right\}_{i=1}^{\binom{m-1}{d-1}}$. Then, for any $\gamma \in \mathbb{R}$, it holds that
	\begin{align}
		& \quad\, {\rm Pr}\left[\sum_{i=1}^{\binom{m-1}{d-1}} W_{i}-\sum_{i=1}^{\binom{m}{d-1}} Z_{i} \leq \frac{\gamma}{(d-1)!} \log n\right] \nonumber \\
		& \leq n^{-\frac{ \binom{m-1}{d-1}(\sqrt{\alpha}-\sqrt{\beta})^{2}}{n^{d-1}}+\frac{\gamma \log (\alpha / \beta)}{2 (d-1)!} + \frac{\left( \binom{m}{d-1} - \binom{m-1}{d-1}\right) (\sqrt{\alpha \beta} - \beta)}{n^{d-1}} }.
	\end{align}
\end{lemma}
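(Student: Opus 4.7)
The plan is a standard Chernoff bound on $S \coloneqq \sum_{i=1}^{\binom{m-1}{d-1}} W_i - \sum_{i=1}^{\binom{m}{d-1}} Z_i$. For any $t > 0$, Markov's inequality applied to the nonnegative random variable $e^{-tS}$ yields
\[
\Pr\!\left[S \leq \tfrac{\gamma \log n}{(d-1)!}\right] \;\leq\; \exp\!\left(\tfrac{t\gamma \log n}{(d-1)!}\right) \cdot \mathbb{E}\!\left[e^{-tS}\right].
\]
By independence of the $W_i$'s and $Z_j$'s together with the elementary bound $1+x \leq e^x$, the moment generating function factorizes and is controlled by
\[
\mathbb{E}[e^{-tS}] \;\leq\; \exp\!\left(\tbinom{m-1}{d-1} p_W(e^{-t}-1) \,+\, \tbinom{m}{d-1} p_Z(e^{t}-1)\right),
\]
where $p_W = \alpha \log n / n^{d-1}$ and $p_Z = \beta \log n / n^{d-1}$.

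The crux of the argument is choosing the right $t$. I would pick the Chernoff--Hellinger balance point $t = \tfrac{1}{2}\log(\alpha/\beta)$ (which is positive since $\alpha > \beta$), for which $p_W e^{-t} = p_Z e^{t} = \sqrt{\alpha\beta}\,\log n/n^{d-1}$. Substituting this choice, the linear prefactor contributes $\tfrac{\gamma \log(\alpha/\beta)}{2(d-1)!}\,\log n$ to the exponent, while the MGF bound becomes
\[
\left[\tbinom{m-1}{d-1}(\sqrt{\alpha\beta}-\alpha) \,+\, \tbinom{m}{d-1}(\sqrt{\alpha\beta}-\beta)\right] \tfrac{\log n}{n^{d-1}}.
\]

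The remainder is algebraic bookkeeping. Using the identities $\sqrt{\alpha\beta}-\alpha = -\sqrt{\alpha}(\sqrt{\alpha}-\sqrt{\beta})$ and $\sqrt{\alpha\beta}-\beta = \sqrt{\beta}(\sqrt{\alpha}-\sqrt{\beta})$, together with the split $\binom{m}{d-1} = \binom{m-1}{d-1} + \bigl(\binom{m}{d-1}-\binom{m-1}{d-1}\bigr)$, the $\binom{m-1}{d-1}$-contributions consolidate into $-\binom{m-1}{d-1}(\sqrt{\alpha}-\sqrt{\beta})^2$, while the leftover term becomes $\bigl(\binom{m}{d-1}-\binom{m-1}{d-1}\bigr)(\sqrt{\alpha\beta}-\beta)$. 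Collecting the three $\log n$-coefficients and rewriting the whole bound in base $n$ produces exactly the claimed inequality. This is a textbook Chernoff argument with no serious obstacle; the only mildly delicate bookkeeping is tracking the slight asymmetry between the two Bernoulli sums, which is precisely what generates the extra additive correction $\bigl(\binom{m}{d-1}-\binom{m-1}{d-1}\bigr)(\sqrt{\alpha\beta}-\beta)/n^{d-1}$ in the exponent.
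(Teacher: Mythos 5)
Your proof is correct and is essentially the approach the paper intends: the paper does not write out a proof but points to Lemma 8 of \citet{abbe2020entrywise}, which is precisely this Chernoff argument with the Hellinger balance point $t=\tfrac{1}{2}\log(\alpha/\beta)$. The Markov step, the $1+x\leq e^x$ control of the moment generating function, and the algebraic consolidation of the $\tbinom{m-1}{d-1}$-terms into $-(\sqrt{\alpha}-\sqrt{\beta})^2$ with the leftover $\bigl(\tbinom{m}{d-1}-\tbinom{m-1}{d-1}\bigr)(\sqrt{\alpha\beta}-\beta)$ correction all check out and reproduce the stated exponent exactly.
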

Lemma \ref{lemma: difbinom} can be proved by adapting the proof techniques in \citet[Lemma 8]{abbe2020entrywise}. This turns out to be the key to proving the information-theoretic optimality of our algorithm. Recalling that $\bm{C} = \mathcal{A}\left[(\bm{H}^*)^{\otimes (d-1)}\right]$, by the definitions of the adjacency tensor $\mathcal{A}$ and the multilinear operator $\mathcal{A}[\cdot]$, one can compute
\[
C_{ik} \overset{\textnormal{d}}{=} (d-1)! \cdot \sum_{i=1}^{\binom{m-1}{d-1}} W_i, \; i \in \ICal_k=\{i\in [n]: H_{ik}^*=1 \}
\]
and
\[
C_{i\ell} \overset{\textnormal{d}}{=} (d-1)! \cdot \sum_{i=1}^{\binom{m}{d-1}} Z_i,\; l\neq k,
\]
where $\{W_i\}$ are i.i.d. $\mathbf{Bern}(\alpha \log n / n^{d-1})$, and $\left\{Z_{i}\right\}$ are i.i.d. $\mathbf{Bern}(\beta \log n / n^{d-1})$ and independent of $\{W_i\}$. Utilizing the result of Lemma \ref{lemma: difbinom}, we then have the following lemma.

\begin{lemma}\label{lemma: ground truth}
	Let $\alpha > \beta >0$ be constants. Denote $\bm{C} = \ACal\left[(\bm{H}^*)^{\otimes (d-1)}\right]$ and $\ICal_k =\{i\in [n]: H_{ik}^*=1 \}$ for all $k\in[K]$. If $\frac{(\sqrt{\alpha} - \sqrt{\beta})^2}{K^{d-1}(d-1)!} >1$ and $n$ is sufficiently large, then there exists a constant $\gamma >0$, which depends only on $\alpha,\beta,d$, and $K$, such that for all $i \in \ICal_k$ and $1 \leq k \neq \ell \leq K$,
	\begin{align} \label{eq: cik-cil}
		C_{ik}-C_{i\ell} \geq \gamma \log n
	\end{align}
	holds with probability at least $1-n^{-\Omega(1)}$.
\end{lemma}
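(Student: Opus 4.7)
The plan is to apply Lemma~\ref{lemma: difbinom} entrywise to each pair $(i,\ell)$ with $i \in \ICal_k$, $\ell \neq k$ and then conclude by a union bound. I would first verify the distributional identities stated in the paragraph immediately preceding the lemma, which follow from the definition of the multilinear operation in \eqref{eq: grad-tensor-def}, the symmetry of $\ACal$, and the $d$-HSBM edge distribution: for fixed $i \in \ICal_k$ and $\ell \neq k$, $C_{ik}$ equals $(d-1)!$ times a $\mathrm{Binomial}(\binom{m-1}{d-1},\alpha\log n/n^{d-1})$ variable (one Bernoulli per unordered $(d-1)$-subset of $\ICal_k\setminus\{i\}$, each corresponding to an intra-community hyperedge), while $C_{i\ell}$ equals $(d-1)!$ times a $\mathrm{Binomial}(\binom{m}{d-1},\beta\log n/n^{d-1})$ variable (one per subset of $\ICal_\ell$, each being an inter-community hyperedge). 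The two sums are independent because the underlying $(d-1)$-tuples of $\ACal$-entries lie in disjoint index sets when $k\neq\ell$.

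Next, I would invoke Lemma~\ref{lemma: difbinom} with threshold $\tfrac{\gamma}{(d-1)!}\log n$ to obtain, for each fixed pair $(i,\ell)$,
\begin{align*}
{\rm Pr}\left[ C_{ik}-C_{i\ell}\le \gamma\log n\right] \;\le\; n^{E(\gamma)},
\end{align*}
where, using Pascal's identity $\binom{m}{d-1}-\binom{m-1}{d-1}=\binom{m-1}{d-2}$,
\begin{align*}
E(\gamma) \;=\; -\frac{\binom{m-1}{d-1}(\sqrt{\alpha}-\sqrt{\beta})^{2}}{n^{d-1}} + \frac{\gamma\log(\alpha/\beta)}{2(d-1)!} + \frac{\binom{m-1}{d-2}(\sqrt{\alpha\beta}-\beta)}{n^{d-1}}.
\end{align*}

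The core of the argument is an asymptotic analysis of $E(\gamma)$. Since $m=n/K$, direct expansion gives $\binom{m-1}{d-1}/n^{d-1} \to 1/((d-1)!\,K^{d-1})$, whereas $\binom{m-1}{d-2}/n^{d-1} = O(1/n) = o(1)$. Therefore
\begin{align*}
E(\gamma) \;=\; -\frac{(\sqrt{\alpha}-\sqrt{\beta})^{2}}{(d-1)!\,K^{d-1}} + \frac{\gamma\log(\alpha/\beta)}{2(d-1)!} + o(1).
\end{align*}
By the standing hypothesis $\frac{(\sqrt{\alpha}-\sqrt{\beta})^{2}}{K^{d-1}(d-1)!} > 1$, the leading term is strictly below $-1$. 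Hence I can fix a constant $\gamma>0$, depending only on $\alpha,\beta,d,K$, small enough that $E(\gamma) \le -(1+\varepsilon)$ for some absolute $\varepsilon>0$ and all $n$ sufficiently large.

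Finally, a union bound over the $n(K-1)$ pairs $(i,\ell)$ with $i\in[n]$ and $\ell\neq k(i)$ yields a total failure probability of at most $n(K-1)\cdot n^{-(1+\varepsilon)}=n^{-\Omega(1)}$, which establishes \eqref{eq: cik-cil}. The only delicate point is the choice of $\gamma$: it must be strictly positive but small enough that the logarithmic correction $\gamma\log(\alpha/\beta)/(2(d-1)!)$ cannot swallow the slack between $-(\sqrt{\alpha}-\sqrt{\beta})^{2}/((d-1)!\,K^{d-1})$ and $-1$. This slack is exactly the information-theoretic margin guaranteed by the strict inequality in the hypothesis, which is why the same threshold governs both exact recovery and the tightness of this lemma.
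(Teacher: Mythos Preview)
Your proposal is correct and follows essentially the same route as the paper: verify the distributional identity $C_{ik}-C_{i\ell}\overset{\textnormal{d}}{=}(d-1)!\big(\sum W_i-\sum Z_i\big)$, apply Lemma~\ref{lemma: difbinom}, analyze the exponent asymptotically to pick $\gamma>0$ so that the exponent is below $-1$, and union bound over all pairs $(i,\ell)$. The paper packages the asymptotic step into the single constant $c_2=\frac{(\sqrt{\alpha}-\sqrt{\beta})^{2}}{K^{d-1}(d-1)!}-\frac{\gamma\log(\alpha/\beta)}{2(d-1)!}-1>0$, whereas you spell out the $o(1)$ terms explicitly via Pascal's identity; the arguments are otherwise identical.
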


Collecting the results from all the lemmas above, we can now show that the projected tensor power iteration possesses a contraction property in a certain neighborhood of $\BH^* \BQ$ for some $\BQ \in \Pi_K$. 
\begin{proposition}	\label{prop: localcontraction}
	Let $\alpha > \beta >0$ be constants satisfying $(\sqrt{\alpha} - \sqrt{\beta})^2 > K^{d-1}(d-1)!$. Suppose that $n > \exp(16C^2/\gamma^2) \}$ and $n$ is sufficiently large such that \eqref{eq: cik-cil} holds.
	Then, with probability at least $1-n^{-\Omega(1)}$, for any fixed $\BH \in \mathcal{H}$ and $\varepsilon \in \left(0, \min \left\{\frac{1}{\sqrt{K(d-1)}}, \frac{\gamma K^{d-3/2}}{16(d-1)(\alpha - \beta)} \right\} \right)$ such that 
	\begin{align}
		\| \BH - \BH^* \BQ\|_F \leq \varepsilon \sqrt{n}
	\end{align}
	for some $\BQ \in \Pi_K$, it holds that
	\begin{align}
		\| \bm{V} - \BH^* \BQ \|_F \leq \kappa \|\BH - \BH^* \BQ\|_F
	\end{align}
	for any $\bm{V} \in \TCal\left( \ACal\left[\BH^{\otimes (d-1)}\right] \right)$, where $$\kappa = 4 \max \left\{\frac{4(d-1)\varepsilon (\alpha - \beta)}{\gamma K^{d-3/2}},  \frac{C}{\gamma \sqrt{\log n}} \right\} \in (0,1).$$
\end{proposition}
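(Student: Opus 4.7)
My plan is to chain the three preceding lemmas. Write $\bm{C}^\star := \ACal[(\BH^*\BQ)^{\otimes(d-1)}]$ and $\bm{C} := \ACal[\BH^{\otimes(d-1)}]$; then the target $\bm{V}$ lies in $\TCal(\bm{C})$. Since $\BQ$ merely permutes columns, $\bm{C}^\star = \ACal[(\BH^*)^{\otimes(d-1)}]\BQ$, so the row-wise separation guaranteed by Lemma \ref{lemma: ground truth} transports along $\BQ$ to give $C^\star_{ik}-C^\star_{i\ell}\ge \gamma\log n$ (for $i$ in the $k$-th permuted community and $\ell\neq k$) with probability at least $1-n^{-\Omega(1)}$. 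The strategy is to first certify $\BH^*\BQ$ as a valid projection of $\bm{C}^\star$ onto $\HCal$, then apply Lemma \ref{lemma: lipproj} to pass to $\|\bm{C}-\bm{C}^\star\|_F$, and finally invoke Lemma \ref{eq: lemma-multi-linear} to convert that back into $\|\BH-\BH^*\BQ\|_F$.

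\textbf{Step 1: projection certificate.} For the first step, I note that $\|\BH\|_F^2 = n$ is constant on $\HCal$, so projecting $\bm{C}^\star$ onto $\HCal$ is equivalent to the balanced assignment problem $\max_{\BH\in\HCal}\langle \BH,\bm{C}^\star\rangle$. Any feasible competitor differs from the planted assignment by a sequence of row-pair swaps between two columns, and the row-separation makes every such swap decrease the objective by at least $2\gamma\log n$. This would yield $\BH^*\BQ \in \TCal(\bm{C}^\star)$, and Lemma \ref{lemma: lipproj} applied with $\delta = \gamma \log n$ then gives
\[
\|\bm{V}-\BH^*\BQ\|_F \le \frac{2}{\gamma \log n}\,\|\bm{C}-\bm{C}^\star\|_F.
\]

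\textbf{Step 2: calibration.} Since $\varepsilon < 1/\sqrt{K(d-1)}$ by hypothesis, Lemma \ref{eq: lemma-multi-linear} is applicable and gives
\[
\|\bm{C}-\bm{C}^\star\|_F \le \left(\tfrac{4(d-1)m^{d-2}\varepsilon n}{\sqrt{K}}(p-q) + C\sqrt{\log n}\right)\|\BH-\BH^*\BQ\|_F.
\]
Substituting $m=n/K$ and $p-q=(\alpha-\beta)\log n/n^{d-1}$ simplifies the first summand to $\frac{4(d-1)\varepsilon(\alpha-\beta)\log n}{K^{d-3/2}}$. Combining with the Step 1 bound and using $a+b\le 2\max\{a,b\}$ then yields
\[
\|\bm{V}-\BH^*\BQ\|_F \le 4\max\left\{\tfrac{4(d-1)\varepsilon(\alpha-\beta)}{\gamma K^{d-3/2}},\tfrac{C}{\gamma\sqrt{\log n}}\right\}\|\BH-\BH^*\BQ\|_F = \kappa\|\BH-\BH^*\BQ\|_F.
\]
The two standing assumptions $\varepsilon < \gamma K^{d-3/2}/(16(d-1)(\alpha-\beta))$ and $n>\exp(16C^2/\gamma^2)$ each force the corresponding term inside the $\max$ strictly below $1/4$, so $\kappa<1$ as claimed.

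\textbf{Main obstacle.} I expect the delicate part to be Step 1: making it airtight that row-wise separation (rather than an entrywise gap or a stability-style bound) is already sufficient to certify $\BH^*\BQ$ as a \emph{global} optimum of the balanced-assignment projection onto $\HCal$. The balancing constraint $\BH^\top\b1_n = m\b1_K$ is what makes the two-row swap decomposition possible, since any deviation from the planted assignment must move a row from one community to another and be compensated by a matching reverse move; the gain of $\gamma\log n$ on each of these two moves is what kills the swap. The remainder is routine bookkeeping under the regime $p,q = \Theta(\log n/n^{d-1})$, together with careful tracking of the permutation $\BQ$ through the multilinear operation $\ACal[\cdot]$.
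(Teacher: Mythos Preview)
Your proposal is correct and follows essentially the same route as the paper: apply Lemma \ref{lemma: lipproj} with $\delta=\gamma\log n$ (furnished by Lemma \ref{lemma: ground truth}) to get $\|\bm{V}-\BH^*\BQ\|_F\le \frac{2}{\gamma\log n}\|\bm{C}-\bm{C}^\star\|_F$, then feed in Lemma \ref{eq: lemma-multi-linear} and simplify. The only cosmetic difference is that the paper pushes the permutation onto the iterate (writing $\bm{V}\BQ^\top\in\TCal(\bm{C}\BQ^\top)$ and comparing against $\BH^*$) rather than onto the planted side as you do; both are equivalent since $\ACal[(\BH^*\BQ)^{\otimes(d-1)}]=\ACal[(\BH^*)^{\otimes(d-1)}]\BQ$. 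One remark on your ``main obstacle'': Step 1 is actually simpler than your swap decomposition suggests---the row-separation $C^\star_{ik}>C^\star_{i\ell}$ already makes $\BH^*\BQ$ the row-wise argmax of $\langle \BH,\bm{C}^\star\rangle$ over \emph{all} of $\BM_{n,K}$, and since $\BH^*\BQ$ happens to lie in $\HCal$ it is automatically optimal over the smaller set $\HCal$ as well, so no balancing/swap argument is needed.
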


\begin{figure*}[t]
\centering
	\begin{minipage}[b]{0.31\linewidth}
		\centering
		\centerline{\includegraphics[width=\linewidth]{./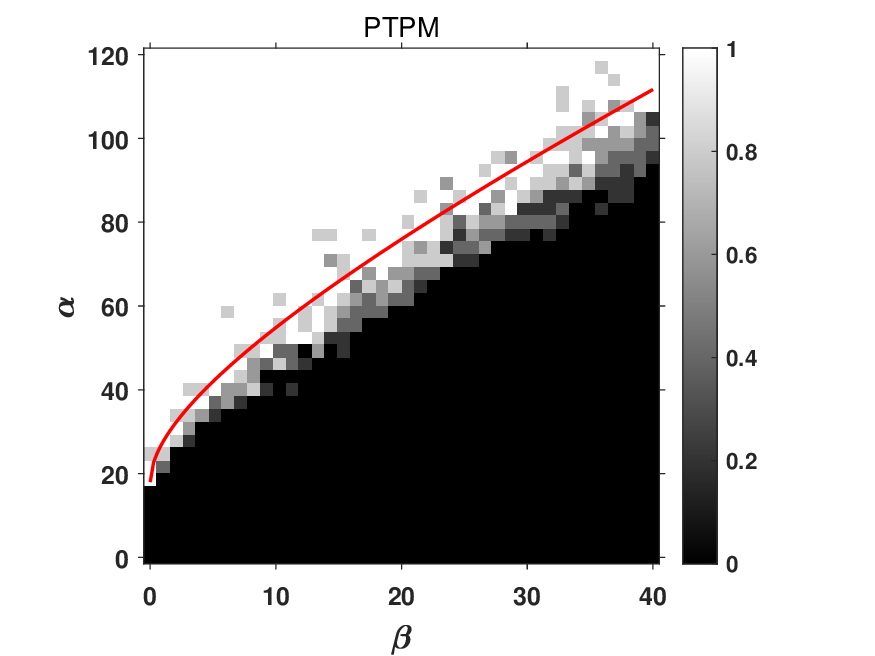}}
		\centerline{(a) PTPM}\medskip
	\end{minipage}
	\begin{minipage}[b]{0.31\linewidth}
		\centering
		\centerline{\includegraphics[width=\linewidth]{./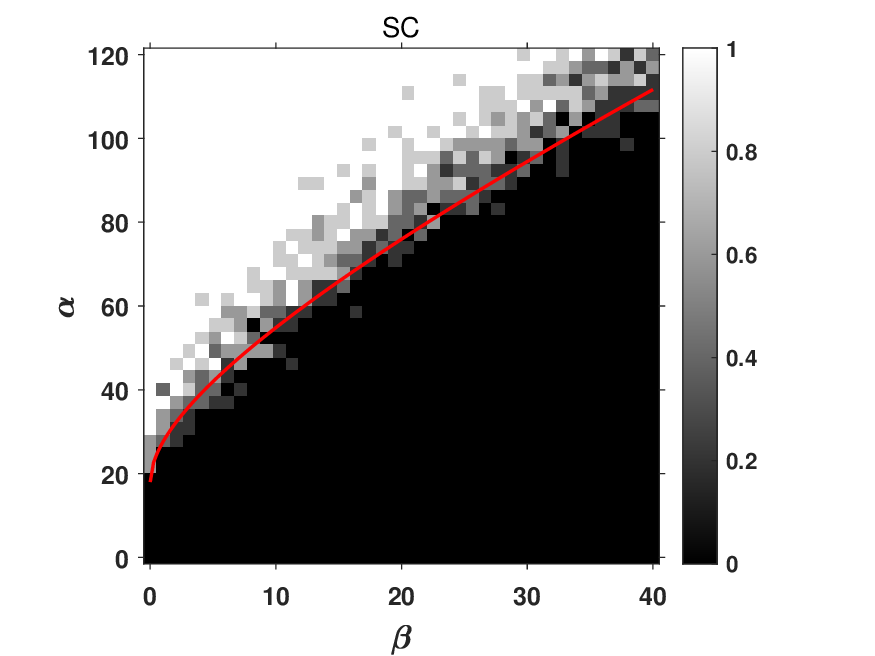}}
		\centerline{(b) SC}\medskip
	\end{minipage}
	\begin{minipage}[b]{0.31\linewidth}
		\centering
		\centerline{\includegraphics[width=\linewidth]{./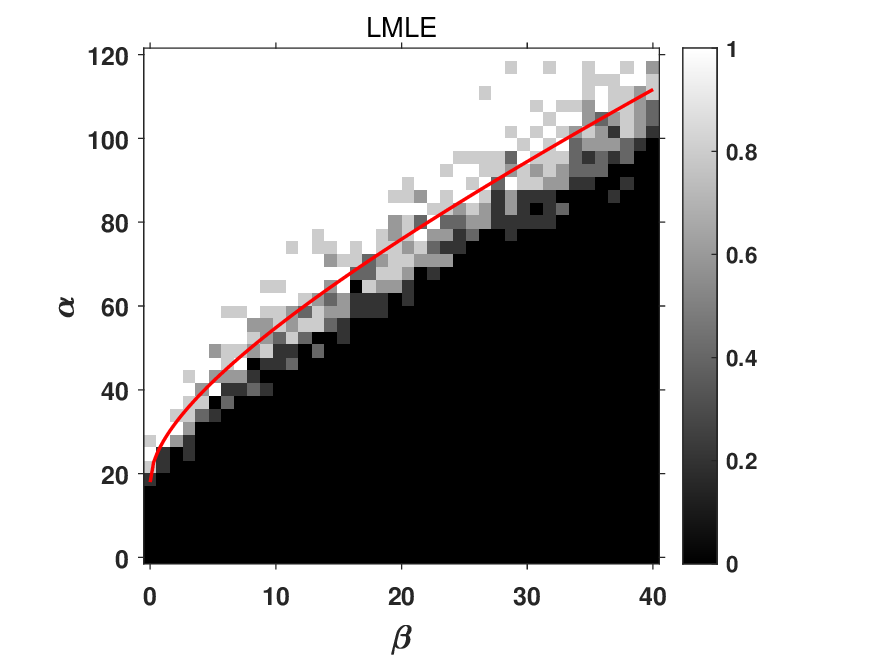}}
		\centerline{(c) LMLE}\medskip
	\end{minipage}
	\vskip -0.2in
	\caption{Phase transition in hypergraphs generated by $3$-HSBM in the setting of $n=210, K=3$: The $x$-axis is $\beta$, the $y$-axis is $\alpha$, and darker pixels represent lower empirical probability of success. The red curve is the information-theoretic threshold $\sqrt{\alpha}-\sqrt{\beta}=\sqrt{18}$.}
	\label{fig: phase-transition}
\end{figure*}

\begin{figure*}[t]
	\begin{center}
		\begin{minipage}[b]{0.31\linewidth}
			\centering
			\centerline{\includegraphics[width=\linewidth]{./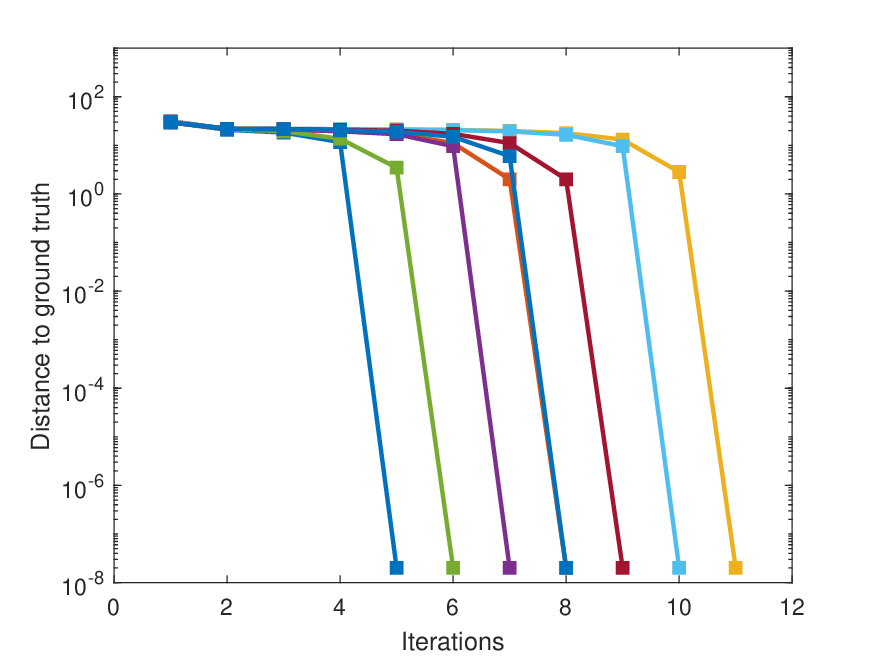}}
			\centerline{(a) {\footnotesize $(d,K,\alpha,\beta)=(3,2,33,8)$}}\medskip
		\end{minipage}
		\begin{minipage}[b]{0.31\linewidth}
			\centering
			\centerline{\includegraphics[width=\linewidth]{./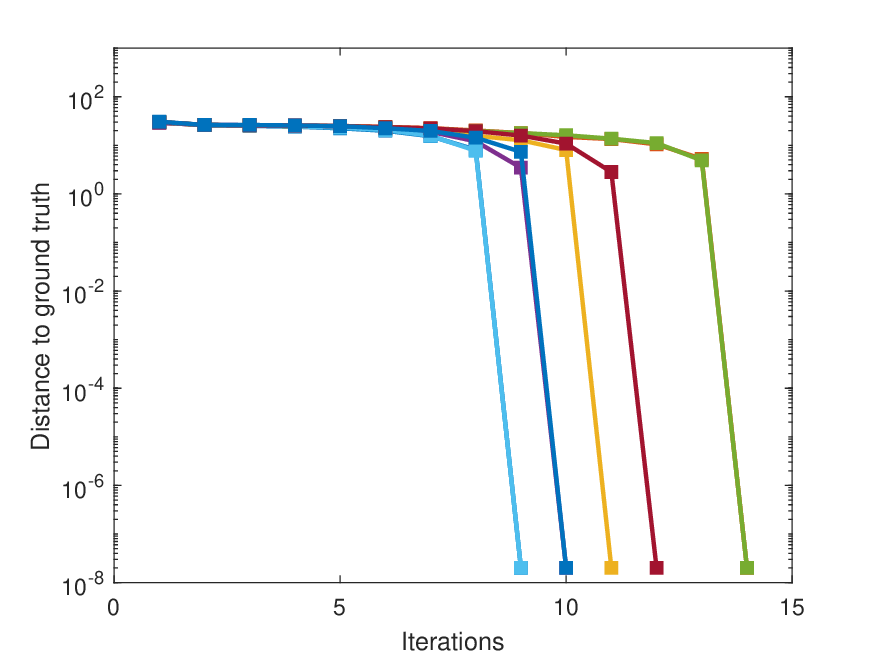}}
			\centerline{(b) {\footnotesize $(d,K,\alpha,\beta)=(3,4,130,32)$}} \medskip
		\end{minipage}
		\begin{minipage}[b]{0.3\linewidth}
			\centering
			\centerline{\includegraphics[width=\linewidth]{./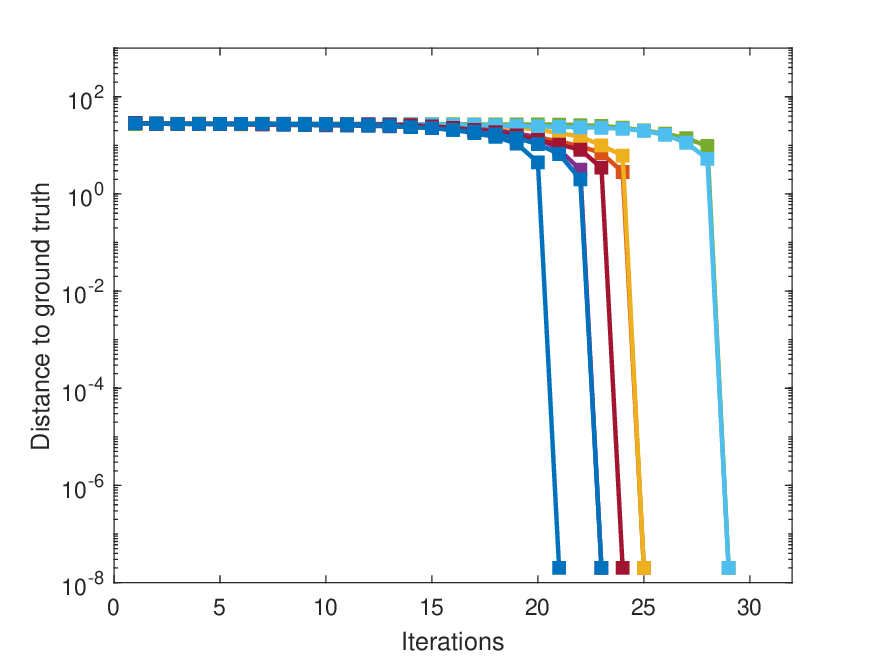}}
			\centerline{(c) {\footnotesize $(d,K,\alpha,\beta)=(3,8,400,64)$}} \medskip
		\end{minipage}
	\end{center}
	\vskip -0.2in
	\caption{Convergence performance of PTPM with random initialization: The $x$-axis is the number of iterations, and the $y$-axis is the distance from an iterate to the ground truth, i.e., $\min_{\BQ \in \Pi_K}\|\BH^t-\BH^*\BQ\|_F$, where $\BH^t$ is the $t$-th iterate generated by PTPM.}
	\label{fig: convergence}
\end{figure*}
Armed with the result of the local contraction property, we are ready to derive the iteration complexity of PTPM.
\begin{theorem}\label{thm: iteration complexity}
	Denote $\phi = \frac{C K^{d-3/2}}{64(d-1)(\alpha - \beta)}$ and $\theta = \frac{1}{4}\min \left\{\frac{1}{\sqrt{K(d-1)}}, \frac{\gamma K^{d-3/2}}{16(d-1)(\alpha - \beta)} \right\}$. Suppose that $n > \exp \left(\max \left\{ \frac{64C^2}{\gamma^2}, \frac{\gamma^2}{C^2}, \frac{\phi^2}{\theta^2}, 2\phi^2,\frac{256C^4}{\gamma^4} \right\}\right)$ and $n$ is sufficiently large such that \eqref{eq: cik-cil} holds. Then, the following statement holds with probability at least $1 - n^{-\Omega(1)}$: If the initial point $\BH^0$ satisfies \eqref{eq: init-partial}, then Algorithm \ref{alg:PGD} outputs $\BH^* \BQ$ within $\left(\lceil 2\log\log n \rceil+\left\lceil \frac{2\log n}{\log\log n} \right\rceil+2\right)$ projected tensor power iterations.
\end{theorem}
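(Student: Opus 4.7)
The plan is to iterate the local contraction property of Proposition \ref{prop: localcontraction} in two successive phases, following the outline already sketched in the discussion after Theorem \ref{thm: time complexity}. First I would set up the base case. Since $\BH^0$ satisfies \eqref{eq: init-partial} and $\BH^*\BQ\in\HCal$, the fact that $\BH^1=\TCal(\BH^0)$ is a best approximation of $\BH^0$ in $\HCal$ yields
\begin{equation*}
\|\BH^1-\BH^*\BQ\|_F \le \|\BH^1-\BH^0\|_F+\|\BH^0-\BH^*\BQ\|_F \le 2\|\BH^0-\BH^*\BQ\|_F \le 2\theta\sqrt{n}.
\end{equation*}
By the definition of $\theta$, the quantity $2\theta$ is strictly below $\min\{1/\sqrt{K(d-1)},\,\gamma K^{d-3/2}/(16(d-1)(\alpha-\beta))\}$, so Proposition \ref{prop: localcontraction} applies at $\BH^1$ with initial radius $\varepsilon_1:=2\theta$.

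\textbf{Phase 1 (constant-rate contraction).} Let $\varepsilon_t$ denote the smallest number with $\|\BH^t-\BH^*\BQ\|_F\le \varepsilon_t\sqrt{n}$. So long as $\varepsilon_t\ge 16\phi/\sqrt{\log n}$, the first argument of the $\max$ defining $\kappa$ in Proposition \ref{prop: localcontraction} dominates, and combined with the choice of $\theta$ and the lower bound $n>\exp(64C^2/\gamma^2)$ this gives $\kappa\le 1/2$, hence $\varepsilon_{t+1}\le\varepsilon_t/2$. A direct calculation, using the hypothesis $n>\exp(\phi^2/\theta^2)$, shows that $N_1:=\lceil 2\log\log n\rceil+1$ iterations starting from $\BH^1$ suffice to push $\varepsilon_{1+N_1}$ below the crossover threshold $16\phi/\sqrt{\log n}$. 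Monotonicity of the sequence $\{\varepsilon_t\}$ keeps every intermediate iterate inside the admissible radius of Proposition \ref{prop: localcontraction}, so the induction goes through.

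\textbf{Phase 2 (accelerating contraction).} Once $\varepsilon_t\le 16\phi/\sqrt{\log n}$, the second argument of the $\max$ takes over and Proposition \ref{prop: localcontraction} gives $\kappa\le 4C/(\gamma\sqrt{\log n})$. Applying this smaller factor an additional $N_2:=\lceil 2\log n/\log\log n\rceil$ times yields
\begin{equation*}
\|\BH^{1+N_1+N_2}-\BH^*\BQ\|_F \le \Bigl(\frac{4C}{\gamma\sqrt{\log n}}\Bigr)^{N_2}\cdot\frac{16\phi\sqrt{n}}{\sqrt{\log n}}.
\end{equation*}
Under $n>\exp(256C^4/\gamma^4)$ the factor $4C/(\gamma\sqrt{\log n})$ is at most $(\log n)^{-1/4}$, and taking logarithms one checks that the exponent $-N_2\cdot\tfrac12\log\log n$ dominates $\tfrac12\log n$, so the right-hand side is strictly less than $\sqrt{2}$ for $n$ large enough. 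Because $\BH^{1+N_1+N_2}\in\HCal$ and distinct elements of $\HCal$ are separated by at least $\sqrt{2}$ in Frobenius norm (a consequence of the row- and column-sum constraints defining $\HCal$), this forces $\BH^{1+N_1+N_2}=\BH^*\BQ$. Counting the initial projection step plus the $N_1+N_2$ power iterations (with an extra $+1$ absorbed into the ceilings) gives the advertised total of $\lceil 2\log\log n\rceil+\lceil 2\log n/\log\log n\rceil+2$.

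\textbf{Main obstacle.} The substantive work has already been done in Proposition \ref{prop: localcontraction}; what is left is essentially bookkeeping. The delicate point is calibrating the crossover threshold $16\phi/\sqrt{\log n}$ between the two phases: it must be small enough that Phase~1 reaches it within $O(\log\log n)$ iterations, yet large enough that the shrinking factor of Phase~2 is already in force the moment we cross over. Each of the lower bounds $n>\exp(64C^2/\gamma^2),\,\exp(\gamma^2/C^2),\,\exp(\phi^2/\theta^2),\,\exp(2\phi^2),\,\exp(256C^4/\gamma^4)$ listed in the hypothesis corresponds to exactly one of the arithmetic inequalities that this calibration requires, and I would verify them case by case as they arise in the argument.
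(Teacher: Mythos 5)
Your two-phase contraction argument follows the paper's proof almost exactly: projecting onto $\HCal$ and bounding $\|\BH^1-\BH^*\BQ\|_F\le 2\theta\sqrt n$ (the paper invokes Lemma~\ref{lemma: lipproj} with $\delta=1$; your triangle-inequality/best-approximation argument is an equally valid alternative), then halving contractions for $N_1$ steps, then contraction at rate $4C/(\gamma\sqrt{\log n})$ for $N_2$ steps, and finally the discreteness of $\HCal$ to upgrade ``$<\sqrt2$'' to equality. The one concrete slip is in the phase-boundary calibration. You carry the \emph{crossover radius} $16\phi/\sqrt{\log n}$ (where the two arguments of the $\max$ in Proposition~\ref{prop: localcontraction} are equal) into Phase~2 as the \emph{starting estimate}, so that after computing $(4C/(\gamma\sqrt{\log n}))^{N_2}\le n^{-1/2}$ your terminal bound is $16\phi/\sqrt{\log n}$, which is $<\sqrt2$ only when $\log n>128\phi^2$ — but the theorem only supplies $\log n>2\phi^2$. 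The paper avoids this by tracking what Phase~1 actually achieves: using $n>\exp(\gamma^2/C^2)\ge\exp(\theta^2/\phi^2)$ and $2\log 2 \ge 1$, the $N_1=\lceil 2\log\log n\rceil+1$ halving steps drive the iterate down to $\|\BH^{N_1}-\BH^*\BQ\|_F\le 2\phi\sqrt{n/\log n}$ (not merely $16\phi\sqrt{n/\log n}$), so Phase~2 ends at $2\phi/\sqrt{\log n}$, which is $<\sqrt2$ precisely under $\log n>2\phi^2$. Replace the Phase-2 starting radius $16\phi/\sqrt{\log n}$ with this sharper $2\phi/\sqrt{\log n}$ — the halving rate already carries you that far within the same $N_1$ — and your argument closes without needing to strengthen the hypothesis on $n$.
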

Recall that the matrix $\bm{H}^*\bm{Q}$ represents the same partition as $\bm{H}^*$ for any permutation matrix $\bm{Q}$. Hence, outputting $\bm{H}^*\bm{Q}$ in Theorem \ref{thm: iteration complexity} would imply achieving exact recovery in Theorem \ref{thm: time complexity}. Further multiplying the derived iteration complexity by the time complexity of each projected tensor power iteration, we can obtain the total time complexity of PTPM in Theorem \ref{thm: time complexity}.

\section{Experiment Results} \label{sec: experiments}
In this section, we report the recovery performance and numerical efficiency of our proposed PTPM for recovering communities on synthetic/real data. We also compare our method with two existing methods, which are the spectral clustering (SC) method in \citet{ghoshdastidar2015provable} and the local MLE (LMLE) method in \citet{chien2019minimax}. In the implementation, we employ \citet[Algorithm 2]{chien2019minimax} for computing an initial point $\BH^0$ in Algorithm \ref{alg:PGD} if we do not specify the initialization method. Moreover, to possibly reduce computational time, we implement a simplified version of the LMLE method as stated in \citet[Remark 4.2]{chien2019minimax}. The MATLAB function \textsf{eigs} for computing the eigenvectors is applied in the SC method and the first stage of the LMLE method. The MATLAB function \textsf{kmeans} for computing the partition is used in the SC method. We use the Tensor Toolbox \citep{osti_1349514} to perform tensor operations and compute $\ACal\left[ \left(\bm{H}^{t} \right)^{\otimes (d-1)}\right]$ based on \eqref{eq: grad-two-eq-expression}. 
\subsection{Phase Transition and Computational Time}
We first conduct experiments to examine the phase transition property and the running time of the aforementioned methods for exact recovery under the symmetric $3$-HSBM. We set $n=210,K=3$ in the experiments and let the parameter $\alpha$ vary from $0$ to $120$ with increments of $3$ and the parameter $\beta$ vary from $0$ to $40$ with increments of $1$. For each pair of $\alpha$ and $\beta$, we generate $5$ instances and calculate the success ratio of exact recovery for all the tested methods. The phase transition results are shown in Figure \ref{fig: phase-transition}. It can be observed that all the methods exhibit a phase transition phenomenon. Moreover, Figure \ref{fig: phase-transition}(a) indicates that PTPM achieves the optimal recovery threshold, which complements our theoretical findings. Besides, we record the total CPU time consumed by each method for completing the phase transition
experiments with different parameters in Table \ref{tab: cputime}. It can be observed that PTPM is faster than SC and substantially faster than LMLE.

\begin{table}[htbp]
	\centering
		\caption{Total CPU time (in seconds) of the different methods in the phase transition experiments.}
  \vskip 0.15 in
	\begin{tabular}{cccc}
		\hline Time (s) & PTPM & SC & LMLE \\
		\hline$n=210, K=3,d=3$ & $\mathbf{76}$ & 246 & $>500$ \\
        \hline$n=210, K=6,d=3$ & $\mathbf{157}$ & 451 & $>1000$ \\
    \hline$n=420, K=3,d=3$ & $\textbf{189}$ & 321 & $>1000$ \\
      \hline$n=420, K=6,d=3$ & $\textbf{330}$ & 628 & $>1000$ \\
		\hline
	\end{tabular}
	\label{tab: cputime}
\end{table}

\subsection{Convergence Performance}
Next, we test the convergence performance of PTPM for exact community recovery under the symmetric $d$-HSBM. Specifically, we choose three different sets of $(d,K, \alpha,\beta)$ such that $\frac{(\sqrt{\alpha} - \sqrt{\beta})^2}{K^{d-1}(d-1)!} >1$ and generate hypergraphs with $n=480$ nodes. In addition, the initial point $\BH^0$ in Algorithm \ref{alg:PGD} is generated via $\BH^0 \in \TCal(\bm{G})$, where $\bm{G} \in \mathbb{R}^{n\times K}$ is a random Gaussian matrix. In each hypergraph realization, we run PTPM 8 times with different random initial points and then plot the distances of the iterative points to the ground truth, i.e., $\min_{\BQ \in \Pi_K}\|\BH^t-\BH^*\BQ\|_F$, against the iteration number in Figure \ref{fig: convergence}. We see that PTPM achieves exact community recovery within 30 iterations even with random initialization. This demonstrates the power of PTPM in the ML estimation problem of the symmetric $d$-HSBM.

We also run a few experiments to assess the performance of PTPM when applying to a \emph{non-uniform} hypergraph with 400 nodes ($n=400$), where the 2-hyperedges and 3-hyperedges are generated independently via the graph SBM and the 3-HSBM with two underlying communities ($K=2$), respectively. We follow the procedure mentioned in Remark~\ref{rmk:non-uniform} to reformulate each hypergraph as a uniform hypergraph and run the experiment 6 times with random initialization. The results show that exact community recovery could be achieved within 10 iterations, which demonstrates the power of PTPM when it is applied to non-uniform hypergraphs. 

\subsection{Recovery Accuracy and Efficiency}

To evaluate the performance of PTPM, we also conduct a real data experiment shown as follows. Based on the 1984 US Congressional voting records available at the UCI repository\footnote{http://archive.ics.uci.edu/ml}, we choose two balanced communities ($K=2$) with $n=336$ number of Congressmen and randomly generate a 3-way symmetric adjacency tensor according to their votes on four issues (namely, columns 4, 5, 12, and 15 in the record). Specifically, if the three Congressmen indicate the same stance on a specific issue, we generate a 3-uniform hyperedge with a probability of $0.05$ and no hyperedge otherwise; cf. \citet{ghoshdastidar2017consistency,chien2019minimax}. To test the performance of PTPM, we generate an initial point $\bm{H}^0\in\mathcal{T}(\bm{G})$ for some randomly generated Gaussian matrix $\bm{G}$ and terminate PTPM when the iteration number reaches 20. We run each algorithm 10 times and select the solution with the lowest function value as its recovered solution. The misclassification rate and running time are recorded in Table \ref{tab: realdata}. As can be seen, our proposed method has better accuracy and efficiency than existing methods.

\begin{table}[htbp]
	\centering
		\caption{The misclassification rate and the total CPU times (in seconds) of the methods on the UCI dataset.}
  \vskip 0.15in
	\begin{tabular}{cccc}
		\hline Algorithms & PTPM & SC & LMLE \\
		\hline Misclassification rate & $\mathbf{0.07}$ & 0.08 & 0.10 \\
        \hline Time (s) & $\mathbf{0.85}$ & 1.56 & $>10$ \\
		\hline
	\end{tabular}
	\label{tab: realdata}
\end{table}

\section{Conclusion} \label{sec: conclusion}
In this work, we developed a simple method, namely PTPM, for tackling the ML estimation problem of the symmetric $d$-HSBM. Our theory guarantees that, given an initialization satisfying the mild partial recovery condition, PTPM achieves exact community recovery down to the information-theoretic limit and has a low time complexity of $\OCal(n\log^2n/\log\log n)$. One intriguing future direction is to design a simple iterative method (e.g., \citet[Algorithm 1]{wang2021non}) for obtaining a qualified initial point. Another future direction is to apply our method to other complex relational models with underlying community structures, such as multilayer networks \citep{jing2021community,fan2022alma,chen2022global} and multiplex networks \citep{pensky2021clustering,noroozi2022sparse}.
\section{Acknowledgments}
The authors thank Prof. Yizhe Zhu, Dr. Yongfeng Li, and Prof. Xiongjun Zhang for their helpful discussions. This research is supported in part by the Hong Kong Research Grants Council (RGC) General Research Fund (GRF) project CUHK 14205421.

\bibliographystyle{icml2023}
\bibliography{HSBM.bib}

\onecolumn
\begin{appendix}
	\begin{center}
		{\Large \bf Appendix\\ }
  	\vspace{0.4cm}
		\leftline{In the appendix, we provide proofs of the technical results presented in Section \ref{sec: mainresults}.}
  \section{Proof of Theorem \ref{thm: time complexity}}
    \begin{proof}
			From Theorem \ref{thm: iteration complexity}, we know that Algorithm \ref{alg:PGD} yields a true partition in $\Big(\lceil 2 \log \log n\rceil+$ $\left\lceil\frac{2 \log n}{\log \log n}\right\rceil+2\Big)$ projected tensor power iterations with probability at least $1-n^{-\Omega(1)}$. Besides, the time complexity of performing the projection operation $\TCal$ (i.e., solving \eqref{eq: defprojoperation}) is $\OCal(K^2 n \log n)$ \citep[Proposition 1]{wang2021optimal}. It remains to show that the time complexity of computing $\ACal\left[ \left(\bm{H}^{t} \right)^{\otimes (d-1)}\right]$ (or equivalently $ \MCal(\ACal) \left( (\bm{H}^t)^{\odot (d-1)} \right)$ by \eqref{eq: grad-two-eq-expression}) is $\OCal(n \log n)$ with high probability.
			Since $\mathcal{A}$ is generated according to the symmetric $d$-HSBM with $p=\alpha \log n/ n^{d-1}$ and $q = \beta \log n/ n^{d-1}$, by denoting the number of nonzero entries of $\ACal$ as $\| \ACal\|_0$, we have 
			\begin{equation}
				\|\ACal\|_0 \overset{\textnormal{d}}{=} d! \cdot \sum_{i=1}^{ K \tbinom{m}{d} } W_i + d! \cdot \sum_{i=1}^{\tbinom{n}{d} -K \tbinom{m}{d}} Z_i,
			\end{equation}
			where $\{W_i\}_{i=1}^{ K \tbinom{m}{d}}$ are i.i.d. $\mathbf{Bern}(p)$ and $\{Z_i\}_{i=1}^{\tbinom{n}{d} -K \tbinom{m}{d}}$ are i.i.d. $\mathbf{Bern}(q)$, independent of $\{W_i\}_{i=1}^{K\tbinom{m}{d}}$. The expectation and variance of $\|\ACal\|_0$ are given by
			\begin{align*}
				\mathbb{E}[\|\ACal\|_0] & = d! \cdot \left(K\tbinom{m}{d}p + \left(\tbinom{n}{d} -K \tbinom{m}{d}\right)q \right),\nonumber\\
				{\rm Var}\left[\|\ACal\|_0 \right] &= (d!)^2 \cdot \left(K\tbinom{m}{d}p(1-p) + \left(\tbinom{n}{d} -K \tbinom{m}{d}\right)q(1-q) \right)\\
				& \leq (d!)^2 \cdot \left(K\tbinom{m}{d}p + \left(\tbinom{n}{d} -K \tbinom{m}{d}\right)q \right).
			\end{align*}
			Applying Bernstein's inequality \citep[Theorem 2.8.4]{vershynin2018high} gives
			\begin{align*}
				&\quad \operatorname{Pr} \left[\left|\|\ACal\|_0 - d! \cdot \left(K\tbinom{m}{d}p + \left(\tbinom{n}{d} -K \tbinom{m}{d}\right)q \right) \right|\ge 2d! \cdot \left(K\tbinom{m}{d}p + \left(\tbinom{n}{d} -K \tbinom{m}{d}\right)q \right)\right]\\
				& = \operatorname{Pr} \left[ \left| \frac{\|\ACal\|_0}{d!}- \left(K\tbinom{m}{d}p + \left(\tbinom{n}{d} -K \tbinom{m}{d}\right)q \right) \right|\ge 2\left(K\tbinom{m}{d}p + \left(\tbinom{n}{d} -K \tbinom{m}{d}\right)q \right)\right] \\
				&\le 2 \exp\left(-\frac{2\left(K\tbinom{m}{d}p + \left(\tbinom{n}{d} -K \tbinom{m}{d}\right)q \right)^2}{\left(K\tbinom{m}{d}p + \left(\tbinom{n}{d} -K \tbinom{m}{d}\right)q \right) + 2\left(K\tbinom{m}{d}p + \left(\tbinom{n}{d} -K \tbinom{m}{d}\right)q \right)/3}\right)\\
				&\le 2\exp \left( -\left(K\tbinom{m}{d}p + \left(\tbinom{n}{d} -K \tbinom{m}{d}\right)q \right)\right)\\
				& \leq 2 \exp \left(- \left(\frac{K m^d}{2^{d-1} d!}p + \frac{n^d }{2^{d-1} d!}q - K\frac{m^d}{d!}q \right) \right)\\
				&= 2n^{- \left(\frac{\beta}{2^{d-1}d!} + \frac{K\alpha}{2^{d-1}K^d d!} - \frac{K\beta}{K^d d!} \right)n} \leq 2n^{- \left( \frac{K\alpha}{2^{d-1}K^d d!} \right)n},
			\end{align*}
			where we assume $m-d+1 \geq m/2$ in the third inequality and the last inequality is due to $K\geq 2$. It holds with probability at least $1 - 2n^{- \left( \frac{K\alpha}{2^{d-1}K^d d!} \right)n}$ that the number of nonzero entries in $\mathcal{A}$ is less than $3 d! \cdot \left(K\tbinom{m}{d}p + \left(\tbinom{n}{d} -K \tbinom{m}{d}\right)q \right) \leq  \frac{3\alpha + 3K^{d-1}\beta}{K^{d-1}}n \log n$. Thus, the time complexity of computing $\ACal\left[ \left(\bm{H}^{t} \right)^{\otimes (d-1)}\right]$ is $\frac{3\alpha + 3K^{d-1}\beta}{K^{d-1}}n \log n$ with probability at least $1 - 2n^{- \left( \frac{K\alpha}{2^{d-1}K^d d!} \right)n}$. The desired bound on the total time complexity of Algorithm \ref{alg:PGD} then follows.
		\end{proof}
		\section{Proof of Lemma \ref{eq: lemma-multi-linear}}
		\begin{proof}
			Let $\Delta = \ACal - \BE[\ACal]$. Without loss of generality, we assume that $\bm{H}^* = \bm{I}_K \otimes \b1_m \in \mathbb{R}^{n \times K}$, which implies that
			\begin{equation*}
				\left(\BH^* \right)^{\otimes d} = \left(\bm{I}_K ^{\otimes d}\right) \otimes \left(\b1_m^{\otimes d}\right) \in S^d(\Br^n);
			\end{equation*}
			see \citet[Section 2.1]{cichocki2016tensor} for the Kronecker product of tensors. 
			By the definition of $\ACal$, one can verify that
			\begin{align} \label{eq: eadecom}
				\BE[\ACal] 
				=  \BCal \otimes \left(\b1_m^{\otimes d} \right) = q\cdot \left(\b1_n^{\otimes d}\right) + (p-q)\cdot \left(\BH^* \right)^{\otimes d},
			\end{align}
			where $\BCal \in S^d(\Br^K)$ is such that $\BCal_{k_1,\dots,k_d}$ equals to $p$ if $k_1=\dots=k_d$ and $q$ otherwise. Let us decompose $(\bm{H}_j)^{\otimes(d-1)}, j\in [K]$ into several orthogonal parts:
			\begin{align} \label{eq: Horthdecom}
				(\bm{H}_j)^{\otimes(d-1)} = \sum_{k=1}^K ((\BH^* \BQ)_k)^{\otimes (d-1)} Z_{kj} + \GCal_j.
			\end{align}
			Here, $\bm{Z} \in \Br^{K \times K}$ is given by
			\begin{equation}\label{eq: Zexpression}
				Z_{kj} = \frac{1}{m^{d-1}} \left \langle (\bm{H}_j)^{\otimes(d-1)}, ((\BH^* \BQ)_k)^{\otimes (d-1)} \right\rangle = \frac{1}{m^{d-1}} \left ( \langle \BH_j, (\BH^* \BQ)_k \rangle \right)^{d-1}
			\end{equation}
   and $\GCal_j \in S^{d-1}(\Br^n)$ satisfies $\left\langle \GCal_j,((\BH^* \BQ)_k)^{\otimes (d-1)} \right\rangle = 0$ for $k\in [K]$. 
			It follows from the definition in \eqref{eq: grad-tensor-def} that for $i\in [n], k\in [K]$,
			\begin{align}
				\left( \left(\b1_n^{\otimes d} \right)\left[\bm{H}^{\otimes (d-1)} \right] \right)_{ik} = \left(\b1_n^\top \bm{H} _k \right)^{d-1}= m^{d-1} = \left( \left(\b1_n^{\otimes d} \right)\left[(\BH^* \BQ)^{\otimes (d-1)} \right] \right)_{ik}. \label{linearoperator-zero}
			\end{align}
			In addition, it is implied by \eqref{eq: grad-tensor-def} that
			\begin{align}
				& \quad \, \left( \left(\left(\BH^*\right)^{\otimes d} \right)\left[\bm{H}^{\otimes (d-1)}\right] \right)_{ik} \nonumber \\
				& =  \left(\left( \left(\bm{I}_K \otimes \b1_m \right)^{\otimes d} \right)\left[\bm{H}^{\otimes (d-1)} \right] \right)_{ik}  \nonumber \\
				& = \sum_{1 \leq i_2,\dots,i_d \leq n} \left( \left(\bm{I}_K \otimes \b1_m \right)^{\otimes d} \right)_{i,i_2,\dots,i_d} \times \bm{H}_{i_2k} \times \cdots \times \bm{H}_{i_dk} \nonumber \\
				& = \sum_{1 \leq i_2,\dots,i_d \leq n} \sum_{j=1}^K \left(\bm{I}_K \otimes \b1_m \right)_{ij} \times \left(\bm{I}_K \otimes \b1_m \right)_{i_2j} \times \dots \times \left(\bm{I}_K \otimes \b1_m \right)_{i_dj}  \times \bm{H}_{i_2k} \times \cdots \times \bm{H}_{i_dk}    \nonumber \\
				& = \sum_{j=1}^K \left(\bm{I}_K \otimes \b1_m \right)_{ij} \left( \sum_{1 \leq i_2,\dots,i_d \leq n}  \left(\bm{I}_K \otimes \b1_m \right)_{i_2j} \times \dots \times \left(\bm{I}_K \otimes \b1_m \right)_{i_dj}  \times \bm{H}_{i_2k} \times \cdots \times \bm{H}_{i_dk}  \right) \nonumber \\
				& = \sum_{j=1}^K \left(\bm{I}_K \otimes \b1_m \right)_{ij} \left( \b1_n^\top \left( \left(\bm{I}_K \otimes \b1_m \right)_j \circ \BH_k   \right) \right)^{d-1} \nonumber \\
				& = \sum_{j=1}^K (\bm{I}_K \otimes \b1_m)_{ij} D_{jk}. \label{eq: D-def}
			\end{align}
			Here, $``\circ"$ represents element-wise multiplication,  $\BH_{j,k} \in \Br^{m}$ denotes the subvector of $\BH_k$ from $H_{i_1 k}$ to $H_{i_2 k}$ with $i_1 = (j-1) \cdot m+1$ and $i_2 = j \cdot m$, and $\bm{D} \in \Br^{K \times K}$ is the matrix given by $D_{jk} = (\b1_m^\top \BH_{j,k} )^{d-1}$ for $j\in [K], k \in [K]$.
			One can verify that $\bm{D} = m^{d-1}\BQ \bm{Z}$ according to \eqref{eq: Zexpression}.
			Combining \eqref{linearoperator-zero}, \eqref{eq: D-def} as well as \eqref{eq: eadecom} yields
			\begin{align*}
				(\BE[\ACal] )\left[\bm{H}^{\otimes (d-1)}\right] - (\BE[\ACal] )\left[ (\bm{H}^*\BQ)^{\otimes (d-1)}\right]
				& = (p-q)\cdot \left( \left(\left(\BH^*\right)^{\otimes d} \right) \left[\bm{H} ^{\otimes (d-1)}\right] -  \left(\left(\BH^*\right)^{\otimes d} \right)\left[(\bm{H}^*\bm{Q} )^{\otimes (d-1)}\right] \right)  \\
				& = (p-q) \cdot (\bm{I}_K \otimes \b1_m)(m^{d-1}\BQ\bm{Z}- m^{d-1}\BQ).
			\end{align*}
			This implies that
			\begin{align}
				\left\| (\BE[\ACal] )\left[\bm{H}^{\otimes (d-1)}\right] - (\BE[\ACal] )\left[ (\bm{H}^*\BQ)^{\otimes (d-1)}\right] \right\|_F
				\leq \sqrt{m}(p-q)m^{d-1}\| \bm{I}_K - \bm{Z} \|_F. \label{eq: fnorm-expA}
			\end{align}
			Next, we provide an upper bound for the term $\| \bm{I}_K - \bm{Z} \|_F$.
			Note that
			\begin{align} \label{eq: Ik-Zeq}
				\| \bm{I}_K - \bm{Z}\|_F^2 = \sum_{k=1}^K (1-Z_{kk})^2 + \sum_{k\neq \ell} Z_{k\ell}^2.
			\end{align}
			Since $Z_{k\ell} \in [0,1]$ for $k,\ell \in [K]$, it follows that
			\begin{align}
				\| \bm{I}_K - \bm{Z}\|_F & \leq \sum_{k=1}^K |1-Z_{kk}| + \sum_{k\neq \ell}|Z_{k\ell}| = \sum_{k=1}^K (1-Z_{kk}) + \sum_{k\neq \ell} Z_{k\ell} \nonumber \\
				&\leq \sum_{k=1}^K (1-Z_{kk}) + \sum_{k\neq \ell} Z_{k\ell}^{\frac{1}{d-1}}. \label{eq: Ik-zupperbound0}
			\end{align}
			By the definition of $\bm{Z}$ in \eqref{eq: Zexpression}, we have
   \begin{equation}\label{eq:sumz1}
       \sum_{\ell=1}^K Z_{k \ell}^{\frac{1}{d-1}} = \frac{1}{m}\sum_{\ell=1}^K \langle \bm{H}_\ell, (\bm{H}^*\bm{Q})_k \rangle = 1
   \end{equation}
   for $k \in [K]$. Then, we can further estimate \eqref{eq: Ik-zupperbound0} by
       \begin{align}
				\| \bm{I}_K - \bm{Z}\|_F \leq& \sum_{k=1}^K (1-Z_{kk}) + \sum_{k=1}^K \left(1-Z_{kk}^{\frac{1}{d-1}}\right)
				\leq 2 \sum_{k=1}^K (1-Z_{kk}). \label{eq: Ik-zupperbound}
			\end{align}
       Here, again, we have used the fact that $Z_{kk} \in [0,1]$ for $k\in [K]$.
			The orthogonal decomposition in \eqref{eq: Horthdecom} gives
			\begin{align} \label{eq: orth-decom-norm}
				\sum_{j=1}^K \langle \GCal_j, \GCal_j \rangle = m^{d-1}K - m^{d-1} \sum_{k=1}^K \sum_{j=1}^K Z_{kj}^2.
			\end{align}
			In addition, using the fact that
			\begin{equation*}
				m^{d-1} - s^{d-1} = (m-s)\left(m^{d-2} + m^{d-3}s + \dots + ms^{d-3} + s^{d-2}\right) \text{ for } 0 \leq s \leq m,
			\end{equation*} one can verify the following inequalities:
			\begin{align} \label{eq: kroneck-ineq}
				m^{d-2} \| \BH - \BH^* \BQ\|_F^2 \leq \sum_{k=1}^K \left\| (\BH_k)^{\otimes (d-1)} - ((\BH^* \BQ)_k)^{\otimes (d-1)} \right\|_F^2 \leq (d-1)m^{d-2}\| \BH - \BH^* \BQ\|_F^2.
			\end{align}
			According to $\| \BH - \BH^* \BQ\|_F \leq \varepsilon \sqrt{n}$, we have
			\begin{align*}
				\sum_{k=1}^K \| (\BH_k)^{\otimes (d-1)} - ((\BH^* \BQ)_k)^{\otimes (d-1)}\|_F^2  & = m^{d-1} \left(\sum_{k=1}^K (1- Z_{kk})^2 + \sum_{k\neq j} Z_{kj}^2 \right) + \sum_{j=1}^K \langle \GCal_j, \GCal_j \rangle \\
				& = m^{d-1} \|\bm{I}_K - \bm{Z} \|_F^2 + \sum_{j=1}^K \langle \GCal_j, \GCal_j \rangle \leq (d-1)m^{d-2}\varepsilon^2 n,
			\end{align*}
			where the first equality is due to \eqref{eq: Horthdecom} and the inequality comes from \eqref{eq: kroneck-ineq}.
			This, together with \eqref{eq: Ik-Zeq} and \eqref{eq: orth-decom-norm}, implies that
			\begin{align}
				\sum_{k=1}^K Z_{kk} \geq \left(1-\frac{(d-1)\varepsilon^2}{2}\right)K.
			\end{align}
			Then, for any $\ell \in [K]$, we have
			\begin{align}\label{eq: zkk1over2}
				Z_{\ell\ell} \geq \left(1-\frac{(d-1)\varepsilon^2}{2}\right)K - \sum_{k\neq \ell}Z_{kk}  \geq 1- \frac{K}{2}(d-1)\varepsilon^2 \geq \frac{1}{2},
			\end{align}
			where the second inequality is due to $Z_{kk}\leq 1$ for all $k \in [K]$ and the last inequality is from $\varepsilon \in (0,1/\sqrt{K(d-1)})$.
			According to \eqref{eq: orth-decom-norm}, we know that
			\begin{align}
				\frac{1}{m^{d-1}}\sum_{j=1}^K \langle \GCal_j, \GCal_j \rangle &=  K - \sum_{k=1}^K \sum_{\ell=1}^K Z_{k\ell}^2 \geq K - \sum_{k=1}^K Z_{kk}^2 - \sum_{k\neq \ell}Z_{k\ell}^{\frac{1}{d-1}} \nonumber \\
				& =  K - \sum_{k=1}^K Z_{kk}^2  - \sum_{k=1}^K \left(1-Z_{kk}^{\frac{1}{d-1}} \right) = \sum_{k=1}^K Z_{kk}^{\frac{1}{d-1}}\left(1-Z_{kk}^{2-\frac{1}{d-1}}\right) \nonumber \\
				& \geq \sum_{k=1}^K\left( \frac{1}{2}\right)^\frac{1}{d-1} \left(1-Z_{kk} \right),
			\end{align}
			where the first inequality is due to $Z_{k\ell} \in [0,1]$ for $k\in[K], \ell \in [K]$, the second equality comes from \eqref{eq:sumz1}, and the last inequality uses \eqref{eq: zkk1over2} and $2-\frac{1}{d-1}\geq 1$ when $d \geq 2$. This, together with \eqref{eq: Ik-zupperbound} and \eqref{eq: kroneck-ineq}, gives
			\begin{align*}
				\| \bm{I}_K - \bm{Z}\|_F & \leq	2^{\frac{1}{d-1}}\frac{2}{m^{d-1}}\sum_{j=1}^K \langle \GCal_j, \GCal_j \rangle \leq \frac{4}{m^{d-1}} 	\sum_{k=1}^K \left\| (\BH_k)^{\otimes (d-1)} - ((\BH^* \BQ)_k)^{\otimes (d-1)} \right \|_F^2\\
				& \leq \frac{4}{m^{d-1}} (d-1)m^{d-2}\| \BH - \BH^* \BQ\|_F^2 \leq \frac{4(d-1)\varepsilon \sqrt{n}}{m} \| \BH - \BH^*\BQ\|_F.
			\end{align*}
			Next, we consider the term $\left \| \Delta \left[\bm{H}^{\otimes (d-1)}\right] - \Delta \left[(\bm{H}^*\bm{Q})^{\otimes (d-1)}\right] \right\|_F$ and provide an upper bound for it. Note that
			\begin{align}\label{eq: boundnoise}
				\left\| \Delta \left[\bm{H}_k^{\otimes (d-1)}\right] - \Delta \left[(\bm{H}^*\bm{Q})_k^{\otimes (d-1)}\right] \right\|_2 = \left\| \mathcal{M}(\Delta) \left(\bm{H}_k^{\odot (d-1)} - (\bm{H}^*\bm{Q})_k^{\odot (d-1)} \right) \right\|_2.
			\end{align}
			From Lemma \ref{prop: concentration}, we know that
			\begin{equation}
				\left\| \mathcal{M}(\Delta) \left(\bm{H}_k^{\odot (d-1)} - (\bm{H}^*\bm{Q})_k^{\odot (d-1)} \right) \right\|_2 \leq C \sqrt{\log n} \| \BH_k - (\BH^*\BQ)_k \|_2,
			\end{equation}
			which implies that
			\begin{align}
				\left \| \Delta \left[\bm{H}^{\otimes (d-1)}\right] - \Delta \left[(\bm{H}^*\bm{Q})^{\otimes (d-1)}\right] \right\|_F \leq C \sqrt{\log n} \|\bm{H} - \bm{H}^*\bm{Q}\|_F.
			\end{align}
			The desired result \eqref{eq: multi-linear} is then established, because 
			\begin{align*}
				& \quad\, \left\| \ACal\left[\bm{H}^{\otimes (d-1)}\right] - \ACal\left[ (\bm{H}^*\BQ)^{\otimes (d-1)}\right] \right\|_F \\
				& \leq  \left\| (\BE[\ACal] )\left[\bm{H}^{\otimes (d-1)}\right] - (\BE[\ACal] )\left[ (\bm{H}^*\BQ)^{\otimes (d-1)}\right] \right\|_F +  \left\| \Delta \left[\bm{H}^{\otimes (d-1)}\right] - \Delta \left[(\bm{H}^*\bm{Q})^{\otimes (d-1)}\right] \right\|_F \\
				& = \left(\frac{4(d-1)m^{d-2}\varepsilon n}{\sqrt{K}} (p-q) + C\sqrt{\log n} \right) \|\bm{H} - \bm{H}^*\bm{Q}\|_F.
			\end{align*}
   This completes the proof.
		\end{proof}
		\section{Proof of Lemma \ref{prop: concentration}}
		\begin{proof}
			By replacing $\by$ with $\frac{\by}{\| \by\|_2}$ in \eqref{eq:concentration}, it suffices to prove that for any $r > 0$ and any vector $\by\in \Br^{n^{d-1}}$ with $\ell \cdot n^{d-2}$ nonzero elements, in which half of them take values of $1/\sqrt{\ell \cdot n^{d-2}}$ and others take $-1/\sqrt{\ell \cdot n^{d-2}}$, there exists a constant $C>0$ such that 
			\begin{align*}
				\| (\BA - \BP) \by \|_2 \leq C\sqrt{\xi/n^{d-2}}.
			\end{align*}
			The fact that
			\begin{equation} \label{eq: sup-infty}
				\| (\BA - \BP) \by \|_2 = \sup_{\bx \in \Br^n, \| \bx\|_2 \leq 1} \bx^\top (\BA - \BP) \by
			\end{equation}
			motivates us to utilize the Kahn-Szemer{\'e}di argument to provide an upper bound for \eqref{eq: sup-infty}.  The idea of the proof is to discretize the set $\{\bx \in \Br^n \colon \| \bx\|_2 \leq 1\}$ into a finite set of grid points and estimate the supremum of $\bm{x}^T(\bm{A} - \bm{P})\bm{y}$ by dividing the pairs of vectors $(\bm{x}, \bm{y})$ into two parts: (i) the small entries of $\bm{x}$ and $\bm{y}$, which we call the \emph{light part}; and (ii) the larger entries of $\bm{x}$ and $\bm{y}$, which we call the \emph{heavy part}.
			
			\paragraph{Discretization}
			Fix $\delta \in (0,1)$, for example $\delta = \frac{1}{2}$, and define the sets $S_t \coloneqq \{\bx \in \Br^n: \| \bx\|_2 \leq t\}$ and
			\[T \coloneqq \left\{\bx=\left(x_1, \ldots, x_n\right)^\top \in S_1: \sqrt{n} x_i / \delta \in \mathbb{Z}, \forall i\in [n] \right\}.
			\]
			Following \citet[ Supplementary material: Lemma 2.1]{lei2015consistency}, we have the following inequality:
			\begin{equation}\label{eq:discretization}
				\| (\BA - \BP) \by \|_2 = \sup_{\bx \in \Br^n, \| \bx\|_2 \leq 1} \bx^\top (\BA - \BP) \by \leq (1-\delta)^{-1} \max_{\bx \in T} \left|\bx^\top (\BA - \BP) \by \right|.
			\end{equation}
			Note that, for any vector $\bx \in T$,
			\[
			\bx^\top(\BA - \BP) \by=\sum_{i\in [n],  j\in [n^{d-1}]} x_i y_j\left(a_{i j}-p_{i j}\right) .
			\]
			Consider the light pairs
			\[
			\mathcal{L}=\mathcal{L}(\bx, \by) \coloneqq \left\{(i, j):\left|x_i y_j\right| \leq \sqrt{\frac{\xi}{n^{d}}} \right\}
			\]
			and heavy pairs
			\[
			\overline{\mathcal{L}}=\overline{\mathcal{L}}(\bx, \by) \coloneqq \left\{(i, j):\left|x_i y_j\right|>\sqrt{\frac{\xi}{n^{d}}}\right\}.
			\]
			Then, it follows from \eqref{eq:discretization} that
			\begin{align*}
				\| (\BA - \BP) \by \|_2 & \leq (1-\delta)^{-1} \max_{\bx \in T} \left|\sum_{i\in [n],  j\in [n^{d-1}]} x_i y_j\left(a_{i j}-p_{i j}\right) \right| \\
				& \leq (1-\delta)^{-1} \left( \max_{\bx \in T} \left|\sum_{(i,j)\in \LCal}x_i y_j\left(a_{i j}-p_{i j}\right) \right| + \max_{\bx \in T} \left|\sum_{(i,j)\in \overline{\mathcal{L}}}x_i y_j\left(a_{i j}-p_{i j}\right) \right| \right).
			\end{align*}
			\paragraph{Bounding the light part} Denote $\BW \coloneqq \BA - \BP$ and $u_{i j} \coloneqq x_i y_j \mathbbm{1}_{\left(\left|x_i y_j\right| \leq \sqrt{\xi/n^d} \right)}$ for $i\in[n], j\in[n^{d-1}]$. The light part of $\bx^\top \BW \by$ is given by
			\begin{equation*}
				\sum_{i\in[n],j\in[n^{d-1}]} w_{i j} u_{i j}.
			\end{equation*}
			Since $\left|u_{i j}\right| \leq \sqrt{\xi/n^d}$, the term $w_{ij}u_{ij}$ is of mean zero and bounded in absolute value by $\sqrt{\xi/n^d}$. By Bernstein's inequality \citep[Theorem 2.8.4]{vershynin2018high}, we have
			\begin{align*}
				\operatorname{Pr}\left[\left|\sum_{i,j} w_{i j} u_{i j}\right| \geq c \sqrt{\frac{\xi}{n^{d-2}}}\right] & \leq 2 \exp \left(\frac{-\frac{1}{2} c^2 \xi/n^{d-2}}{\sum_{i,j} p_{i j}\left(1-p_{i j}\right) u_{i j}^2+ \frac{1}{3}\sqrt{\frac{\xi}{n^d}} c \sqrt{\frac{\xi}{n^{d-2}}}}\right) \\
				& \leq 2 \exp \left(\frac{-\frac{1}{2} c^2 \xi/n^{d-2}}{p_{\max } \sum_{i,j} u_{i j}^2+ \frac{c\xi}{3n^{d-1}}}\right) \\
				& \leq 2 \exp \left(\frac{-3 c^2 \xi/n^{d-2}}{6\xi/n^{d-1}+ \frac{2c\xi}{n^{d-1}}}\right)
				\leq 2 \exp \left(\frac{-3c^2}{6+2c}n \right).
			\end{align*}
			The third inequality follows from the facts that $\xi \geq n^{d-1} p_{\max }$ and
			\[
			\sum_{i,j} u_{i j}^2 \leq \sum_{ i, j} x_i^2 y_j^2=\|\bx\|_2^2\|\by\|_2^2 \leq 1 .
			\]
			Applying the union bound and the volume bound $|T| \leq e^{n \log (7 / \delta)}$ \citep{lei2015consistency}, we obtain
			\begin{align}
				\operatorname{Pr}\left[ \max_{\bx \in T} \left|\sum_{ (i,j)\in {\mathcal{L}(\bx, \by) }} x_i y_j w_{ij} \right| \geq c \sqrt{\frac{\xi}{n^{d-2}}}\right] \leq 2 \exp \left( -\left(\frac{3c^2}{6+2c} - \log\left(\frac{7}{\delta}\right) \right)n \right).
			\end{align}
			\paragraph{Bounding the heavy part} The more challenging part of the proof is to show that the heavy part
			\[
			\max_{\bx \in T} \left|\sum_{ (i,j)\in {\overline{\mathcal{L}}(\bx, \by) }} x_i y_j w_{ij} \right|
			\]
			is upper bounded by $c \sqrt{\frac{\xi}{n^{d-2}}}$ with high probability for some universal constant $c$. Observe that the expectation of $\sum_{ (i,j)\in {\overline{\mathcal{L}}(\bx, \by) }} x_i y_j a_{ij}$ can be well controlled:
			\begin{align*}
				\left|\sum_{ (i,j)\in {\overline{\mathcal{L}}(\bx, \by) }} x_i y_j p_{ij} \right| & = \left|\sum_{ (i,j)\in {\overline{\mathcal{L}}(\bx, \by) }} \frac{x_i^2 y_j^2}{x_iy_j} p_{ij} \right| \leq \sum_{ (i,j)\in {\overline{\mathcal{L}}(\bx, \by) }} \frac{x_i^2 y_j^2}{|x_iy_j|} p_{ij}\\
				& \leq p_{\max} \sqrt{\frac{n^{d}}{\xi}}\sum_{ (i,j)\in {\overline{\mathcal{L}}(\bx, \by) }}x_i^2 y_j^2 \leq p_{\max} \sqrt{\frac{n^{d}}{\xi}} = \sqrt{\frac{\xi}{n^{d-2}}},
			\end{align*}
   where the second inequality comes from the definition of heavy pairs.
			Then, it suffices to show that 
			\begin{align} \label{eq: heavypartgoal-a}
				\left|\sum_{ (i,j)\in {\overline{\mathcal{L}}(\bx, \by) }} x_i y_j a_{ij} \right| = \OCal\left(\sqrt{\frac{\xi}{n^{d-2}}}\right)
			\end{align}
			with high probability.
			
			We will focus on the heavy pairs $(i, j)$ such that $x_i>0, y_j>0$ and denote
			$$
			\overline{\mathcal{L}}_1 \coloneqq \left\{(i, j) \in \overline{\mathcal{L}}: x_i>0, y_j>0\right\}.
			$$
			The other three cases are similar. Notice that if $y_j \neq 0$, given the assumption that $\ell \leq n$, we have $|y_j| = \frac{1}{\sqrt{n^{d-1}}}\sqrt{ \frac{n}{\ell}} \geq \frac{\delta}{\sqrt{n^{d-1}}}$. In what follows, we use the following notation:
			\begin{itemize}
				\item $I_1 \coloneqq \left\{i: \frac{\delta}{\sqrt{n}} \leq x_i \leq \frac{2 \delta}{\sqrt{n}}\right\}, I_s \coloneqq \left\{i: \frac{\delta}{\sqrt{n}} 2^{s-1}<x_i \leq \frac{\delta}{\sqrt{n}} 2^s\right\}$ for $s=$ $2,3, \ldots,\left\lceil\log _2 \frac{\sqrt{n}}{\delta}\right\rceil$.
    \item $J_1 \coloneqq \left\{j: \frac{\delta}{\sqrt{n^{d-1}}} \leq y_j \leq \frac{2 \delta}{\sqrt{n^{d-1}}}\right\}, J_t \coloneqq \left\{j: \frac{\delta}{\sqrt{n^{d-1}}} 2^{t-1}<y_j \leq \frac{\delta}{\sqrt{n^{d-1}}} 2^t\right\}$ for $t=$ $2,3, \ldots,\left\lceil\log _2 \frac{\sqrt{n^{d-1}}}{\delta}\right\rceil$.
				\item $e(I, J) \coloneqq \sum_{i \in I, j \in J} a_{i j}$.
				\item $\mu(I, J) \coloneqq \mathbb{E}[e(I, J)],~\bar{\mu}(I, J) \coloneqq p_{\max }|I||J|$. For simplicity, we will use $\mu$ and $\bar{\mu}$ when we do not need to specify their dependence on $I$ and $J$.
				\item $\lambda_{s t} \coloneqq e\left(I_s, J_t\right) / \bar{\mu}_{s t}$, where $\bar{\mu}_{s t} \coloneqq \bar{\mu}\left(I_s, J_t\right)$.
				\item $\alpha_s \coloneqq \left|I_s\right| 2^{2 s} / n,~ \beta_t \coloneqq \left|J_t\right| 2^{2 t} / n^{d-1},~\sigma_{s t} \coloneqq \lambda_{s t} \sqrt{\xi} 2^{-(s+t)}$.
			\end{itemize}
			The following two lemmas are important to the rest of the proof.
			\begin{lemma}[Bounded degree]\label{lemma: rwosumconcentration}
				For $c>0$, there exists a constant $c_1=$ $c_1(c)$ such that with probability at least $1-n^{-c}$,
				\begin{equation} \label{eq: sumnk-1bou}
					\sum_{j\in[n^{d-1}]}a_{ij} \leq c_1 \xi~\text{ for all }~i \in [n].
				\end{equation}
			\end{lemma}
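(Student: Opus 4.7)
The plan is to establish a Chernoff-type concentration bound for the row sum $\sum_{j \in [n^{d-1}]} a_{ij}$ for each fixed $i \in [n]$, and then take a union bound over all $n$ rows. The key fact enabling the argument is that the assumption $\xi \geq c_0 \log n$ supplies enough ``mean mass'' so that a polynomial-in-$n$ tail bound is available for a single row, leaving plenty of room to absorb the factor of $n$ from the union bound.

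First, I would fix $i$ and observe that $\sum_{j} a_{ij}$ is a sum of $n^{d-1}$ independent Bernoulli random variables with total mean $\mu_i := \sum_{j} p_{ij} \leq n^{d-1} \cdot p_{\max} = \xi$. Second, I would apply the standard multiplicative Chernoff bound (or equivalently the binomial tail estimate $\binom{N}{k} p^k \leq (eNp/k)^k$) to obtain that for any $c_1 > e$,
\[
\Pr\left[\sum_{j} a_{ij} \geq c_1 \xi\right] \leq \left(\frac{e\,\mu_i}{c_1 \xi}\right)^{c_1 \xi} \leq \left(\frac{e}{c_1}\right)^{c_1 \xi}.
\]
Third, since $\xi \geq c_0 \log n$, choosing $c_1 = c_1(c)$ large enough that $c_1 \log(c_1/e) \cdot c_0 \geq c + 1$ yields
\[
\Pr\left[\sum_{j} a_{ij} \geq c_1 \xi\right] \leq n^{-(c+1)}.
\]
Finally, a union bound over $i \in [n]$ gives
\[
\Pr\left[\exists\, i \in [n]:\; \sum_{j} a_{ij} > c_1 \xi\right] \leq n \cdot n^{-(c+1)} = n^{-c},
\]
which is the desired conclusion.

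This proof is essentially routine, and I do not anticipate a genuine obstacle: the quantitative assumption $\xi \geq c_0 \log n$ is precisely calibrated so that single-row Chernoff gives a polynomial tail strong enough to survive the union bound. The only minor care needed is to verify that the constant $c_1$ can indeed be chosen as a function of $c$ (and the fixed constant $c_0$) independently of $n$ and the specific probabilities $p_{ij}$; this is immediate from the last display since the bound depends on the $p_{ij}$ only through the upper bound $\mu_i \leq \xi$.
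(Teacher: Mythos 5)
Your proof is correct and essentially matches the paper's (terse) argument: the paper invokes Bernstein's inequality plus a union bound, while you use the multiplicative Chernoff bound plus a union bound, and for sums of independent Bernoulli variables with mean $\le \xi \ge c_0\log n$ these two concentration inequalities are interchangeable here, yielding the same polynomial-in-$n$ tail with $c_1=c_1(c,c_0)$ independent of $n$ and the $p_{ij}$.
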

			\begin{proof}
				The result follows directly by applying Bernstein's inequality and the union bound.
			\end{proof}
			
			\begin{lemma}[Bounded discrepancy]\label{lem: boudis}
				For any $c>0$, there exist constants $c_2=c_2(c) >1$ and $c_3=c_3(c)>1$ such that with probability at least $1-2 n^{-c}$, for a fixed index set $J\subseteq [n^{d-1}]$ and any index set $I\subseteq [n]$ with $|I| \leq|J|/n^{d-2}$, at least one of the following hold:\\
				1) $\frac{e(I, J)}{\bar{\mu}(I, J)} \leq e c_2$;\\
				2) $e(I, J) \log \frac{e(I, J)}{\bar{\mu}(I, J)} \leq c_3\frac{|J|}{n^{d-2}} \log \frac{n^{d-1}}{|J|}$.
			\end{lemma}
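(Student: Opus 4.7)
I would follow the standard Chernoff-plus-union-bound argument from the Kahn--Szemer\'edi paradigm (as in \citet{feige2005spectral,lei2015consistency}), adapted to the rectangular $n\times n^{d-1}$ setting. For any fixed $(I,J)$, $e(I,J)=\sum_{i\in I,\,j\in J}a_{ij}$ is a sum of $|I|\cdot|J|$ independent Bernoullis with $\mathbb{E}[e(I,J)]\le\bar\mu(I,J)=p_{\max}|I||J|$, so the multiplicative Chernoff inequality gives
\[
\Pr[e(I,J)\ge\lambda\bar\mu]\le\exp\bigl(-\lambda\bar\mu(\log\lambda-1)\bigr),\qquad\lambda\ge 1.
\]
I would then union-bound over the $I\subseteq[n]$ with $|I|\le|J|/n^{d-2}$ via the entropy estimate $\sum_{k\le m}\binom{n}{k}\le(en/m)^{m}$ applied at $m=|J|/n^{d-2}$, which contributes a factor of at most $(en^{d-1}/|J|)^{|J|/n^{d-2}}$.

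Writing $\lambda:=e(I,J)/\bar\mu$ and $T:=c_3\,|J|n^{-(d-2)}\log(n^{d-1}/|J|)$, the simultaneous failure of 1) and 2) is precisely the event $\{\lambda>ec_2\}\cap\{\lambda\bar\mu\log\lambda>T\}$, which is contained in $\{\lambda\ge\lambda_\star\}$ with $\lambda_\star=\max\{ec_2,\tilde\lambda\}$ and $\tilde\lambda$ defined by $\tilde\lambda\bar\mu\log\tilde\lambda=T$. Choosing $c_2$ large enough that $\log\lambda_\star-1\ge\tfrac12\log\lambda_\star$ for all relevant $\lambda_\star$, Chernoff yields $\Pr[\text{both 1) and 2) fail for fixed }I]\le\exp(-T/2)$ after a short case split on whether $\lambda_\star=ec_2$ or $\lambda_\star=\tilde\lambda$ (in the former case the ratio $\log c_2/(1+\log c_2)$ is absorbed by enlarging $c_2$). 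Multiplying by the union-bound factor gives an overall failure probability of at most
\[
\exp\Bigl(\tfrac{|J|}{n^{d-2}}\bigl[1-(\tfrac{c_3}{2}-1)\log\tfrac{n^{d-1}}{|J|}\bigr]\Bigr).
\]
For $|J|<n^{d-2}$ the constraint forces $I=\emptyset$ and the statement is vacuous; for $|J|\ge n^{d-2}$ a choice such as $c_3\ge 2c+4$ renders the exponent $\le -c\log n$ uniformly.

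The main obstacle is the uniformity of the calibration step across all admissible $|J|$. When $|J|$ is close to $n^{d-1}$ the factor $\log(n^{d-1}/|J|)$ is tiny, so condition 2) becomes stringent, but the prefactor $|J|/n^{d-2}$ is of order $n$ and carries the exponent; at the opposite extreme $|J|\asymp n^{d-2}$ the prefactor is $\Theta(1)$ while $\log(n^{d-1}/|J|)\asymp\log n$ does the work. Verifying that a single pair $(c_2,c_3)$ depending only on $c$ handles both regimes --- and that the Chernoff tail in the single-pair step is sharp enough to outpace the entropy cost $(en^{d-1}/|J|)^{|J|/n^{d-2}}$ --- is the principal technical point. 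The factor of $2$ in the advertised $2n^{-c}$ merely accommodates the two sub-cases of the Chernoff split.
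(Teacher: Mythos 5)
Your per-pair Chernoff bound and calibration are essentially the same as the paper's, and your exponent computation $\exp\!\bigl[m\bigl(1-(\tfrac{c_3}{2}-1)L\bigr)\bigr]$, with $m=|J|/n^{d-2}$ and $L=\log(n^{d-1}/|J|)$, is correct. But the closing claim that taking $c_3\ge 2c+4$ ``renders the exponent $\le -c\log n$ uniformly'' over $|J|\ge n^{d-2}$ is false. As soon as $L<2/(c_3-2)$, equivalently $|J|>n^{d-1}e^{-2/(c_3-2)}$, the coefficient $1-(\tfrac{c_3}{2}-1)L$ is \emph{positive}, so the exponent is $+\Theta(m)=+\Theta(n)$ and the failure-probability estimate reads $e^{\Theta(n)}$, proving nothing. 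Your narrative for the dense regime (``the prefactor $|J|/n^{d-2}$ \dots\ carries the exponent'') has the sign reversed: once the coefficient turns positive, a large $m$ only makes the bound worse. No choice of $(c_2,c_3)$ depending solely on $c$ can repair this, because the per-pair tail you propagate, $\exp(-T/2)$ with $T\asymp mL$, degenerates to $1$ as $L\to 0$ while the entropy cost $(en^{d-1}/|J|)^m$ degenerates to $e^{m}$.

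The paper closes precisely this hole with an ingredient your plan never invokes: the bounded-degree event of Lemma~\ref{lemma: rwosumconcentration}. Conditioned on $\sum_j a_{ij}\le c_1\xi$ for all $i$, whenever $|J|/n^{d-2}\ge n/e$ one has, deterministically, $e(I,J)\le |I|c_1\xi$ and therefore $e(I,J)/\bar\mu(I,J)\le c_1\xi/(p_{\max}|J|)=c_1 n^{d-1}/|J|\le c_1 e$, so condition~1) already holds for \emph{every} $I$ --- no union bound, no Chernoff tail, and no dependence on $L$. The probabilistic argument you outline is then deployed only on the complementary range $|J|/n^{d-2}<n/e$, where $L\ge 1$, the subadditivity $m\log(n/m)\ge\log n$ over $m\in[1,n/e]$ kicks in, and your arithmetic goes through as intended (the paper takes $c_3=2c+6$). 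Incidentally, the factor of $2$ in the advertised $1-2n^{-c}$ comes from summing the bounded-degree failure probability and the discrepancy failure probability --- not, as you suggest, from the two sub-cases of the Chernoff calibration --- which is another tell that the bounded-degree half of the argument is missing from your plan.
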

			\begin{proof}
				Suppose that the event in \eqref{eq: sumnk-1bou} holds. If $|J|/n^{d-2} \geq n / e$, then \eqref{eq: sumnk-1bou} implies that $\frac{e(I, J)}{\xi|I||J| / n^{d-1}} \leq \frac{|I| c_1 \xi}{\xi|I| / e} \leq c_1 e$. If $|J|/n^{d-2}<n / e$, then by \citet[Lemma 4.5]{zhou2021sparse}, we have for any $\tau >1$,
				\begin{align}
					\operatorname{Pr}[e(I, J) \geq \tau \bar{\mu}(I, J)] & \leq \operatorname{Pr}\left[\sum_{i\in I, j\in J}\left(a_{i j}-p_{i j}\right) \geq \tau \bar{\mu}(I, J)-\sum_{i\in I, j\in J} p_{i j}\right] \nonumber \\
					& \leq \operatorname{Pr}\left[\sum_{i\in I, j\in J} w_{i j} \geq(\tau-1) \bar{\mu}(I, J)\right]  \nonumber \\
					& \leq \exp \left((\tau-1) \bar{\mu}-\tau \bar{\mu} \log \tau \right) \leq \exp \left[-\frac{1}{2}(\tau \log \tau) \bar{\mu}\right], \label{eq: taulogtau}
				\end{align}
				where the last inequality holds when $\tau \geq 8$.
				For any given $c_3>0$, let $t(I, J)$ denote the unique value of $t$ satisfying $t \log t=\frac{c_3|J|/n^{d-2}}{\bar{\mu}(I, J)} \log \frac{n^{d-1}}{\mid J \mid}$. Let $\tau(I, J)\coloneqq\max \{8, t(I, J)\}$. Then, by \eqref{eq: taulogtau}, we have
				\begin{align*}
						\operatorname{Pr}[e(I, J) \geq \tau(I, J) \bar{\mu}(I, J)] \leq \exp \left[-\frac{1}{2} \bar{\mu}(I, J) \tau(I, J) \log \tau(I, J)\right]
						\leq \exp \left[-\frac{1}{2} c_3\frac{|J|}{n^{d-2}} \log \frac{n^{d-1}}{|J|}\right].
				\end{align*}
				Let $\Omega \coloneqq \left\{(I,J): |I| \leq g =  \frac{|J|}{n^{d-2}} \leq \frac{n}{e} \right\}$. We bound
				\begin{align*}
					\operatorname{Pr}\left[\exists(I, J)\in \Omega, e(I, J) \geq \tau(I, J) \bar{\mu}(I, J)\right] 
					\leq & \sum_{(I,J)\in \Omega} \exp \left[-\frac{1}{2} c_3\frac{|J|}{n^{d-2}} \log \frac{n^{d-1}}{|J|}\right] \\
					\leq & \sum_{h: 1 \leq h \leq g \leq n / e} \,\sum_{I:|I|=h} \exp \left[-\frac{1}{2} c_3 g \log \frac{n}{g}\right] \\
					\leq &\sum_{h: 1 \leq h \leq g \leq n / e} \tbinom{n}{h} \exp \left[-\frac{1}{2} c_3 g \log \frac{n}{g}\right].
				\end{align*}
				Since $\tbinom{n}{h} \leq (\frac{ne}{h})^h$ for any integer $1\leq h \leq n$, the last line above is bounded by
				\begin{align*}
					\sum_{h: 1 \leq h \leq g \leq n / e}\left(\frac{n e}{h}\right)^h \exp \left[-\frac{1}{2} c_3 g \log \frac{n}{g}\right]
					=& \sum_{h: 1 \leq h \leq g \leq n / e} \exp \left[-\frac{1}{2} c_3 g \log \frac{n}{g}+h \log \frac{n}{h}+h\right] \\
					\leq & \sum_{h: 1 \leq h \leq g \leq n / e} \exp \left[-\frac{1}{2} c_3 g \log \frac{n}{g}+g \log \frac{n}{g}+ g\right] \\
					\leq & \sum_{h: 1 \leq h \leq g \leq n / e} \exp \left[-\frac{1}{2}\left(c_3-4\right) g \log \frac{n}{g}\right] \\
					\leq & \sum_{h: 1 \leq h \leq g \leq n / e} n^{-\frac{1}{2}\left(c_3-4\right)} \leq n^{-\frac{1}{2} \left(c_3-6\right)},
				\end{align*}
				where the inequalities repeatedly use the assumption that $h \leq g \leq n / e$ and the fact that $t \mapsto t \log \frac{n}{t}$ is increasing on $[1, n / e]$.
				
				Hence, with probability at least $1-n^{-\frac{1}{2}\left(c_3-6\right)}$, we know $e(I, J) \leq$ $\tau(I, J) \bar{\mu}(I, J)$ for all $|I| \leq|J|/n^{d-2} \leq n /e$. We further divide the set of pairs $(I, J)$ satisfying $|I| \leq|J|/n^{d-2} \leq n / e$ into two groups by the value of $\tau(I, J)$. For the pairs satisfying $\tau(I,J)=8$, we get
				\begin{equation*}
					e(I, J) \leq \tau(I, J) \bar{\mu}(I, J)=8 \bar{\mu}(I, J).
				\end{equation*}
				For all the other pairs, we have $\tau(I, J)=t(I, J)>8$ and $\frac{e(I, J)}{\bar{\mu}(I, J)} \leq$ $t(I, J)$. It follows that
				$$
				\frac{e(I, J)}{\bar{\mu}(I, J)} \log \frac{e(I, J)}{\bar{\mu}(I, J)} \leq t(I, J) \log t(I, J)=\frac{c_3|J|/n^{d-2}}{\bar{\mu}(I, J)} \log \frac{n^{d-1}}{|J|},
				$$
				which implies that
				$$
				e(I, J) \log \frac{e(I, J)}{\bar{\mu}(I, J)} \leq c_3\frac{|J|}{n^{d-2}}  \log \frac{n^{d-1}}{|J|} .
				$$
				The desired result follows by letting $c_2=\max \left\{c_1, 8\right\}$ and $c_3=2 c+6$.
			\end{proof}
			
			Now, we write the left-hand side of \eqref{eq: heavypartgoal-a} as
			\begin{align}
				\sum_{(i, j) \in \overline{\mathcal{L}}_1} x_i y_j a_{i j} & \leq \sum_{(s, t):\, 2^{s+t} \geq \sqrt{\xi}} e\left(I_s, J_t\right) \frac{2^s \delta}{\sqrt{n}} \frac{2^t \delta}{\sqrt{n^{d-1}}} = \delta^2 \sqrt{\frac{\xi}{n^{d-2}}} \sum_{(s, t): \,2^{s+t} \geq \sqrt{\xi}} \alpha_s \beta_t \sigma_{s t}. \label{eq: heavysum}
			\end{align}
			We estimate this sum by splitting the pairs of $(s, t)$ into twelve different categories. Denote by $J_{t'}$ the nonempty set in $\left\{J_t:t=1,\dots,\left\lceil\log _2 \frac{\sqrt{n^{d-1}}}{\delta}\right\rceil\right\}$. Let
			\[
			\mathcal{C}\coloneqq \left\{s: 2^{s+t'} \geq \sqrt{\xi},\left|I_s\right| \leq\left|J_{t'}\right|/n^{d-2}\right\}
			\] and define the following sets:
			\begin{itemize}
				\item $\mathcal{C}_1 \coloneqq \left\{s \in \mathcal{C}: \sigma_{s t'} \leq 1 \right\}$,
				\item $\mathcal{C}_2\coloneqq \left\{ s \in \mathcal{C} \backslash \mathcal{C}_1: \lambda_{s t'} \leq e c_2\right\}$,
				\item $\mathcal{C}_3\coloneqq \left\{s\in \mathcal{C} \backslash\left(\mathcal{C}_1 \cup \mathcal{C}_2\right): 2^s \geq \sqrt{\xi} 2^{t'}\right\}$,
				\item $\mathcal{C}_4\coloneqq\left\{s \in \mathcal{C} \backslash\left(\mathcal{C}_1 \cup \mathcal{C}_2 \cup \mathcal{C}_3\right): \log \lambda_{st'} > \frac{1}{4} \left(2t' \log 2 + \log ( \beta_{t'}^{-1} ) \right)\right\}$,
				\item $\mathcal{C}_5\coloneqq \left\{s \in \mathcal{C} \backslash\left(\mathcal{C}_1 \cup \mathcal{C}_2 \cup \mathcal{C}_3 \cup \mathcal{C}_4\right): 2 t' \log 2 \geq \log ( \beta_{t'}^{-1} ) \right\}$,
				\item $\mathcal{C}_6\coloneqq \left\{s \in \mathcal{C} \backslash\left(\mathcal{C}_1 \cup \mathcal{C}_2 \cup \mathcal{C}_3 \cup \mathcal{C}_4 \cup \mathcal{C}_5\right)\right\}$.
			\end{itemize}
			The other six categories can be defined using a similar partition of
			$$
			\mathcal{C}^{\prime}\coloneqq \left\{s: 2^{s+t'} \geq \sqrt{\xi},\left|I_s\right|>\left|J_{t'}\right|/n^{d-2}\right\}
			$$
			and can be analyzed similarly. We now analyze each of the six cases separately. To that end, we will repeatedly make use of the following estimates:
			\[
			\sum_s \alpha_s \leq \sum_i\left|2 x_i / \delta\right|^2 \leq 4 \delta^{-2}, \beta_{t'} \leq l\cdot n^{d-2} 2^{2 t'} / n^{d-1},~\text{and}~\beta_t = 0 \text{ for } t \neq t'.
			\]
			In addition, we have $\beta_{t'} = \sum_t \beta_t \leq 4 \delta^{-2}$.
			
			\textbf{Indices in} $\mathbf{\CCal_1}$: Since $\sigma_{st'}\le 1$, we have
			\begin{align}
				\sum_{s} \alpha_s \beta_{t'} \sigma_{s t'} \mathbbm{1}_{\left(s \in \mathcal{C}_1\right)} \leq 16 \delta^{-4}.
			\end{align}
			
			\textbf{Indices in} $\mathbf{\CCal_2}$: By definition, it holds that $\sigma_{s t'}=\lambda_{s t'} \sqrt{\xi} 2^{-(s+t')} \leq \lambda_{st'} \leq ec_2$. Hence,
			\begin{align}
				\sum_{s} \alpha_s \beta_{t'} \sigma_{s t'} \mathbbm{1}_{\left(s \in \mathcal{C}_2\right)} \leq ec_2 16 \delta^{-4}.
			\end{align}
			
			\textbf{Indices in} $\mathbf{\CCal_3}$: We know from Lemma \ref{lemma: rwosumconcentration} that
			$e(I_s,J_{t'}) \leq c_1 |I_s|\xi$ holds with probability at least $1-n^{-c}$. It follows that $\lambda_{st'} \leq c_1 n^{d-1}/|J_{t'}|$ and
			\begin{align*}
				\sum_{s} \alpha_s \beta_{t'} \sigma_{s t'} \mathbbm{1}_{\left(s \in \mathcal{C}_3\right)} \leq \sum_{s} \alpha_s \frac{|J_{t'}|2^{2t'}}{n^{d-1}} \frac{c_1 n^{d-1}}{|J_{t'}|} \sqrt{\xi} 2^{-(s+t')} \mathbbm{1}_{\left(s \in \mathcal{C}_3\right)}
				\leq c_1 \sum_s \alpha_s \frac{\sqrt{\xi}}{2^{s-t'}}\mathbbm{1}_{\left(s \in \mathcal{C}_3\right)}  \leq 4c_1 \delta^{-2},
			\end{align*}
   where the last inequality comes from $2^{s-t'} \ge \sqrt{\xi}$.
			
			To cope with $\CCal_4,\CCal_5,$ and $\CCal_6$, we rely on the second case in Lemma \ref{lem: boudis}, which is equivalent to
			\begin{align}
				& \lambda_{s t'}\left|I_s \| J_{t'}\right| \frac{\xi}{n^{d-1}} \log \lambda_{s t'} \leq c_3\frac{\left|J_{t'}\right|}{  n^{d-2}} \log \frac{n^{d-1}}{\left|J_{t'}\right|} \nonumber \\
				\iff \; & \sigma_{s t'} \alpha_s \log \lambda_{s t'} \leq c_3 \frac{2^{s-t'}}{\sqrt{\xi}}\log \frac{n^{d-1}}{\left|J_{t'}\right|} \nonumber \\
				\iff \; & \sigma_{s t'} \alpha_s \log \lambda_{s t'} \leq c_3 \frac{2^{s-t'}}{\sqrt{\xi}} { \left(2t' \log 2 + \log ( \beta_{t'}^{-1} ) \right)}. \label{eq: c4c5c6}
			\end{align}
			\textbf{Indices in} $\mathbf{\CCal_4}$:
			The inequality $\log \lambda_{st'} > \frac{1}{4} \left( 2t' \log 2 + \log ( \beta_{t'}^{-1} ) \right)$ and \eqref{eq: c4c5c6} imply that $\sigma_{st'} \alpha_s\leq 4c_3 2^{s-t'}/\sqrt{\xi} $. Then, we have
			\begin{align*}
				\sum_{s} \alpha_s \beta_{t'} \sigma_{s t'} \mathbbm{1}_{\left(s \in \mathcal{C}_4\right)}  =  \beta_{t'} \sum_{s} \alpha_s \sigma_{st'} \mathbbm{1}_{\left(s \in \mathcal{C}_4\right)} \leq 8c_3 \beta_{t'} \leq 32 c_3 \delta^{-2},
			\end{align*}
			where the first inequality is from $s \notin \CCal_3$.
			
			\textbf{Indices in} $\mathbf{\CCal_5}$:
			In this case, we have $2 t' \log 2 \geq \log (\beta_{t'}^{-1})$. Also, since $s \notin$ $\mathcal{C}_4$, we have $\log \lambda_{s t'} \leq \frac{1}{4}\left(2 t' \log 2+\log (\beta_{t'}^{-1}) \right) \leq t' \log 2$. Thus, $\lambda_{s t'} \leq 2^{t'}$. Besides, since $s \notin \mathcal{C}_1$, we have $1 \leq \sigma_{s t'}=\lambda_{s t'} \sqrt{\xi} 2^{-(s+t')} \leq \sqrt{\xi} 2^{-s}$. It follows that $2^s \leq \sqrt{\xi}$.
			
			Since $s\notin \mathcal{C}_2$, we have $\log \lambda_{s t'} \geq 1$. Together with $2 t' \log 2 \geq \log \beta_{t'}^{-1}$, \eqref{eq: c4c5c6} implies that
			\[
			\sigma_{s t'} \alpha_s \leq c_3 \frac{2^{s-t'}}{\sqrt{\xi}} 4 t' \log 2.
			\]
			Hence,
			\begin{align*}
				\sum_{s} \alpha_s \beta_{t'} \sigma_{s t'} \mathbbm{1}_{\left(s \in \mathcal{C}_5\right)} &=\beta_{t'} \sum_s \alpha_s \sigma_{s t'} \mathbbm{1}_{\left(s \in \mathcal{C}_5\right)} \leq \beta_{t'} \sum_s c_3 \frac{2^{s-t'}}{\sqrt{\xi}} 4 t'(\log 2) \mathbbm{1}_{\left(s \in \mathcal{C}_5\right)} \\
				& \leq 4 c_3 (\log 2) \beta_{t'} t' 2^{-t'} \sum_s  \frac{2^s}{\sqrt{\xi}} \mathbbm{1}_{\left(s \in \mathcal{C}_5\right)} \leq 4 c_3 (\log 2) \beta_{t'} \leq 16 c_3 \delta^{-2}.
			\end{align*}
			\textbf{Indices in} $\mathbf{\CCal_6}$: In this case, we have $2 t' \log 2<\log (\beta_{t'}^{-1})$. Since $s \notin \mathcal{C}_2 \cup \CCal_4$, we have $\log \lambda_{s t'} \geq 1$ and $\log \lambda_{s t'} \leq \frac{1}{2} \log \beta_{t'}^{-1} \leq \log \beta_{t'}^{-1}$. Now, we compute
			\begin{align*}
				\sum_{s} \alpha_s \beta_{t'} \sigma_{s t'} \mathbbm{1}_{\left(s \in \mathcal{C}_6\right)} &=\sum_s \alpha_s  \beta_{t'} \lambda_{s t'} \sqrt{\xi} 2^{-(s+t')} \mathbbm{1}_{\left(s \in \mathcal{C}_6\right)} \\
				& \leq \sum_s \alpha_s \sqrt{\xi} 2^{-(s+t')} \mathbbm{1}_{\left(s \in \mathcal{C}_6\right)} \leq \sum_s \alpha_s \leq 4 \delta^{-2}.
			\end{align*}
			The proof is complete.
		\end{proof}

\section{Proof of Remark \ref{remark: diagonalofA}}
\begin{proof}\label{proof: remark4.4}
            Let $\RCal \in S^d(\Br^n)$ be a tensor, whose entries are given by
	\begin{align*}
		\RCal_{i_1,i_2,\ldots,i_d} =
		\begin{cases}
            p,~&\text{if~there~exist}~i_j,i_k~\text{such~that}~i_j=i_k~\text{and}~
			i_1,\ldots,i_d~\text{belong~to~the~same~community};\\
			q,& \text{if~there~exist}~i_j,i_k~\text{such~that}~i_j=i_k~\text{and}~
			i_1,\ldots,i_d~\text{belong~to~different~communities};\\
			0,&\text{otherwise.}
		\end{cases}
	\end{align*}
	Then, the expectation of the adjacency tensor defined in Definition \ref{def: dHSBM} can be written as
	\begin{equation}
		\EE[\ACal] = q\cdot \left(\b1_n^{\otimes d}\right) + (p-q)\cdot \left(\BH^* \right)^{\otimes d} - \RCal.
	\end{equation}
	Following the argument in \eqref{eq: fnorm-expA}, it remains to show that the effect brought by the multilinear operator of the tensor $\RCal$ on the estimation error is negligible. Indeed, consider
	\begin{align}
		\left\| \RCal\left[\bm{H}^{\otimes (d-1)}\right] - \RCal\left[ (\bm{H}^*\BQ)^{\otimes (d-1)}\right] \right\|_F^2 \nonumber & = \left\| \MCal( \RCal) \left(\bm{H}^{\odot (d-1)} - (\bm{H}^*\bm{Q})^{\odot (d-1)} \right) \right\|_F^2\nonumber\\
		& \leq \left\| \MCal( \RCal) \right\|_F^2 \left\| \bm{H}^{\odot (d-1)} - (\bm{H}^*\bm{Q})^{\odot (d-1)} \right\|_F^2. \label{eq:diag-res}
	\end{align}
	Then, on the one hand, since the number of nonzero elements of $\RCal$ is at most $n\binom{d}{2}n^{d-2}$, we can estimate $\|\MCal(\RCal)\|_F^2$ by
	\begin{equation*}
		\|\MCal (\RCal) \|_F^2 \le p^2 \binom{d}{2}n^{d-1}.
	\end{equation*}
	On the other hand, upon applying \eqref{eq: kroneck-ineq}, we see that
	\begin{align*}
		\quad \left\| \bm{H}^{\odot (d-1)} - (\bm{H}^*\bm{Q})^{\odot (d-1)} \right\|_F^2 = \sum_{k=1}^K\left\| \bm{H}_k^{\otimes (d-1)} - (\bm{H}^*\bm{Q})_k^{\otimes (d-1)} \right\|_F^2 \le (d-1)m^{d-2}\|\bm{H} - \bm{H}^*\BQ\|_F^2.
	\end{align*}
	Combining the above two facts, \eqref{eq:diag-res} then implies that
	\begin{align*}
		\quad \left\| \RCal\left[\bm{H}^{\otimes (d-1)}\right] - \RCal\left[ (\bm{H}^*\BQ)^{\otimes (d-1)}\right] \right\|_F
		 \leq \OCal(pn^{d-3/2}) \cdot \|\bm{H} - \bm{H}^*\BQ\|_F,
	\end{align*}
	which is dominated by $\OCal((p-q)n^{d-1}) \cdot \|\bm{H} - \bm{H}^*\BQ\|_F $ (i.e., the first term on the right-hand side of \eqref{eq: multi-linear}).
\end{proof}

 \section{Proof of Lemma \ref{lemma: ground truth}} 
		\begin{proof}
			Since $\ACal$ is generated according to the $d$-HSBM, we know that for all $i \in \ICal_k$ with $\ell\neq k$,
			\begin{align}
				C_{ik} - C_{i\ell} & = \sum_{1 \leq i_2,\dots,i_d \leq n} \ACal_{i,i_2,\dots,i_d} H^*_{i_2k}\dots H^*_{i_dk} - \sum_{1 \leq i_2,\dots,i_d \leq n} \ACal_{i,i_2,\cdots,i_d} H^*_{i_2\ell}\cdots H^*_{i_d\ell} \\
				& \overset{\textnormal{d}}{=} (d-1)! \cdot \sum_{i=1}^{\binom{m-1}{d-1}} W_i - (d-1)! \cdot \sum_{i=1}^{\binom{m}{d-1}} Z_i, \label{eq: diffbinomial}
			\end{align}
			where $\{W_i\}$ are i.i.d. $\mathbf{Bern}(\alpha \log n / n^{d-1})$ and $\left\{Z_{i}\right\}$ are i.i.d. $\mathbf{Bern}(\beta \log n / n^{d-1})$ that are independent of $\{W_i\}$. By Lemma \ref{lemma: difbinom} and the fact that $n$ is sufficiently large, we have
			\begin{align}
				{\rm Pr}\left[C_{i k}-C_{i \ell} \geq \gamma \log n, \forall i \in \mathcal{I}_{k}, 1 \leq k \neq \ell \leq K\right] \geq 1-K n^{-c_{2}/2}
			\end{align}
			with 
			\begin{equation} \label{eq: c2def}
				c_2 = \frac{(\sqrt{\alpha}-\sqrt{\beta})^{2}}{K^{d-1} (d-1)!}-\frac{\gamma \log (\alpha / \beta)}{2(d-1)!}-1 >0.
			\end{equation}
			This completes the proof.
		\end{proof}
	\section{Proof of Proposition \ref{prop: localcontraction}}	
		\begin{proof}
			Suppose that $\bm{V}\in \TCal \left(\ACal[\BH^{\otimes(d-1)}]\right)$. From \citet[Lemma 9]{wang2021optimal}, it follows that
			\begin{equation*}
				\bm{VQ}^\top \in \TCal\left(\ACal[\BH^{\otimes(d-1)}]\BQ^\top\right)
			\end{equation*}
			for any $\BQ \in \Pi_K$. In addition, by Lemma \ref{eq: lemma-multi-linear}, Lemma \ref{lemma: lipproj}, and the fact that
			\begin{equation*}
				\ACal[(\BH^*)^{\otimes(d-1)}] \BQ = \ACal[(\BH^*\BQ)^{\otimes(d-1)}], \; \BQ \in \Pi_K,
			\end{equation*}
			we have
			\begin{align*}
				\| \bm{V} - \BH^* \BQ\|_F & = \| \bm{VQ}^\top - \BH^*\|_F
				\leq \frac{2 \left\| \ACal[\BH^{\otimes(d-1)}] \BQ^\top - \ACal[(\BH^*)^{\otimes(d-1)}] \right\|_F }{\gamma \log n}\\
				& = \frac{2 \left\| \ACal[\BH^{\otimes(d-1)}] - \ACal[(\BH^*\BQ)^{\otimes(d-1)}]  \right\|_F }{\gamma \log n} \\
				& \leq \left(\frac{8(d-1)m^{d-2}\varepsilon n}{\sqrt{K}} (p-q) + 2C\sqrt{\log n} \right) \|\BH - \BH^* \BQ\|_F/(\gamma \log n) \\
				& \leq  \left( \frac{8(d-1)\varepsilon (\alpha - \beta)}{\gamma K^{d-3/2}}  +\frac{2C}{\gamma \sqrt{\log n}}\right) \| \BH - \BH^* \BQ\|_F \\
				& \leq  4 \max \left\{\frac{4(d-1)\varepsilon (\alpha - \beta)}{\gamma K^{d-3/2}},  \frac{C}{\gamma \sqrt{\log n}} \right\} \|\BH - \BH^* \BQ\|_F.
			\end{align*}
			This completes the proof.
		\end{proof}
\section{Proof of Theorem \ref{thm: iteration complexity}}		
		\begin{proof}
			The iteration $\BH^1 \in \TCal(\BH^0)$ yields 
			\begin{equation} \label{eq: iteration1step}
				\|\BH^1 - \BH^* \BQ\|_F = \|\BH^1 \BQ^\top - \BH^*\|_F  \le 2 \|\BH^0\BQ^\top - \BH^*\|_F = 2 \|\BH^0- \BH^*\BQ\|_F,
			\end{equation}
   where the inequality comes from Lemma \ref{lemma: lipproj}.
			The following proof is divided into two parts. We first show that for all $t \geq 2$, $\BH^t \in \HCal$ satisfies
			\begin{align} \label{eq: firstpart_goal}
				\| \BH^t - \BH^* \BQ \|_F \leq \frac{1}{2} \| \BH^{t-1} - \BH^*\BQ\|_F \quad \text{ and } \quad  \| \BH^t - \BH^* \BQ\|_F \leq 2 \theta \sqrt{n},
			\end{align}
			and estimate the iteration number $N_1$ such that
			\begin{align}\label{N1:pf-thm-1}
				\|\BH^{N_1} - \BH^* \BQ\|_F \le  2\phi\sqrt{\frac{n}{\log n}}.
			\end{align}
    Suppose that $\BH^0 \in \mathbb{M}_{n,K}$ satisfies \eqref{eq: init-partial}. Combining this initialization condition with \eqref{eq: iteration1step} gives
    \begin{align} \label{eq: h1bound}
        \BH^1 \in \HCal \quad \text{ and } \quad \| \BH^1 - \BH^*\BQ \|_F \leq 2 \theta \sqrt{n}.
    \end{align}
    According to Proposition \ref{prop: localcontraction}, we have
    \begin{align*}
        \| \BH^2 - \BH^* \BQ \|_F & \leq    4 \max \left\{\frac{4(d-1) \theta (\alpha - \beta)}{\gamma K^{d-3/2}},  \frac{C}{\gamma \sqrt{\log n}} \right\}  \| \BH^1 - \BH^* \BQ \|_F\\
        & \leq  4 \max \left\{ \frac{1}{8},  \frac{C}{\gamma \sqrt{\log n}}  \right\}  \| \BH^1 - \BH^* \BQ \|_F
        = \frac{1}{2} \| \BH^1 - \BH^* \BQ \|_F \leq 2\theta \sqrt{n},
    \end{align*}
where the equality comes from $n \geq \exp(64C^2/\gamma^2) $. Therefore, \eqref{eq: firstpart_goal} holds for $t=2$.  By a simple inductive argument, we can show that \eqref{eq: firstpart_goal} holds for $t\ge 3$. Let $N_1 \coloneqq \lceil 2\log\log n \rceil + 1$. It then follows from \eqref{eq: firstpart_goal} that
\begin{align*}
    \|\BH^{N_1} - \BH^* \BQ\|_F &\le \left(\frac{1}{2}\right)^{\lceil 2\log\log n \rceil}\|\BH^1 - \BH^* \BQ\|_F  \le \left(\frac{1}{2}\right)^{ 2\log\log n }2\theta\sqrt{n} \le \left(\frac{1}{2}\right)^{\log\log n + 2\log\left(\frac{\theta}{\phi}\right)}2\theta\sqrt{n} \\
    & \le \left(\frac{1}{2}\right)^{\frac{\log\log n + 2\log\left(\frac{\theta}{\phi}\right)}{2\log 2}}2\theta\sqrt{n} = 2\phi\sqrt{\frac{n}{\log n}},
\end{align*}
where the third inequality is from $n \ge \exp\left(\gamma^2/C^2\right) \ge \exp\left(\theta^2/\phi^2\right)$ and the last inequality comes from $2\log 2 \ge 1$. Thus, \eqref{N1:pf-thm-1} holds for $N_1=\lceil 2\log\log n \rceil + 1$.
   
Next, we show that for all $k \ge 1$, $\BH^{N_1+k} \in \HCal$ satisfies $\|\BH^{N_1+k}-\BH^* \BQ\|_F \le 2\phi\sqrt{n/\log n}$ and
    \begin{align}\label{step2:pf-thm-1}
        \|\BH^{N_1+k} - \BH^*\BQ\|_F \le \frac{4C}{\gamma\sqrt{\log n}} \|\BH^{N_1+k-1}-\BH^*\BQ\|_F,
    \end{align}
    and compute the iteration number $N_2$ such that 
    \begin{align}\label{N2:pf-thm-1}
        \|\BH^{N_2+N_1} - \BH^*\BQ\|_F <  \sqrt{2}. 
    \end{align}
    Since $n\ge \exp\left(\phi^2/\theta^2\right)$, it follows that $2\phi/\sqrt{\log n} \le 4\phi/\sqrt{\log n} \le 4\theta$. According to Proposition \ref{prop: localcontraction}, we obtain
    \begin{align*}
        \|\BH^{N_1+1}-\BH^* \BQ\|_F  \le  4\max\left\{ \frac{8(d-1)\phi (\alpha - \beta)}{\gamma K^{d-3/2}\sqrt{\log n}},\frac{C}{\gamma\sqrt{\log n}} \right\}\|\BH^{N_1}-\BH^* \BQ\|_F
        \le \frac{4C}{\gamma\sqrt{\log n}}\|\BH^{N_1}-\BH^* \BQ\|_F.
    \end{align*}
    Then, \eqref{step2:pf-thm-1} holds for $k=1$. We can show that \eqref{step2:pf-thm-1} holds for $k\ge 2$ by a simple inductive argument. Let $N_2 \coloneqq \left\lceil \frac{2\log n}{\log\log n} \right\rceil$. According to $n \geq \exp(256C^4/\gamma^4)$ and $n \ge \exp\left( 2\phi^2\right)$, we have $\log\log n\ge 4\log(4C/\gamma)$ and $2\phi/\sqrt{\log n} < \sqrt{2}$. This, together with \eqref{step2:pf-thm-1}, yields
    \begin{align*}
        \|\BH^{N_1+N_2} - \BH^* \BQ\|_F  & \le   \left(\frac{4C}{\gamma\sqrt{\log n}} \right)^{\left\lceil \frac{2\log n}{\log\log n} \right\rceil} \|\BH^{N_1}-\BH^* \BQ\|_F  \le 2\phi\sqrt{\frac{n}{\log n}} \left( \frac{4C}{\gamma\sqrt{\log n}} \right)^{\frac{2\log n}{\log\log n}} \\
        & \le 2\phi\sqrt{\frac{n}{\log n}} \left( \frac{4C}{\gamma\sqrt{\log n}} \right)^{\frac{\log n}{\log\log n+2\log(\gamma/(4C))}}  = \frac{2\phi}{\sqrt{\log n}} < \sqrt{2}.
    \end{align*}
    Thus, \eqref{N2:pf-thm-1} holds for $N_2=\left\lceil \frac{2\log n}{\log\log n} \right\rceil$.
    
    Once \eqref{N2:pf-thm-1} holds, we have $\BH^{N_1+N_2}=\BH^* \BQ$. This, together with $\mathcal{T}\left(\ACal[(\BH^*\BQ)^{\otimes(d-1)}]\right) = \{\BH^*\BQ\}$, gives the desired result.
\end{proof}
		
	\end{center}
	\vspace{-0.1in}
	\setcounter{section}{0}
\end{appendix}

\end{document}